\newtheorem{theorem}{Theorem}[section]
\newtheorem{lemma}[theorem]{Lemma}
 \theoremstyle{definition}
\newtheorem{definition}[theorem]{Definition}
\theoremstyle{remark}
\newtheorem{remark}[theorem]{Remark}
\numberwithin{equation}{section}
\begin{document}

\title[The Oblique boundary value problem]
{The oblique boundary value problem for degenerate Hessian quotient type equations}

\author{Ni Xiang}
\address{Faculty of Mathematics and Statistics, Hubei Key Laboratory of Applied Mathematics, Hubei University,  Wuhan 430062, P.R. China}
\email{nixiang@hubu.edu.cn}

\author{Yuni Xiong$^\ast$}
\address{Faculty of Mathematics and Statistics, Hubei Key Laboratory of Applied Mathematics, Hubei University,  Wuhan 430062, P.R. China}
\email{202121104011767@stu.hubu.edu.cn}

\author{Lina Zheng}
\address{Faculty of Mathematics and Statistics, Hubei Key Laboratory of Applied Mathematics, Hubei University,  Wuhan 430062, P.R. China}
\email{202221104011317@stu.hubu.edu.cn}

\keywords{Degenerate Hessian quotient type equations, Oblique boundary value problems.}

\subjclass[2020]{ 35J70, 35J66. }
\thanks{This research was supported by funds from the National Natural Science Foundation of China No. 11971157, No. 12426532.}
\thanks{$\ast$ Corresponding author}

\begin{abstract}
In this paper, we investigate the oblique boundary value problem for degenerate Hessian quotient type equations in a smooth bounded domain. Without imposing any geometric restrictions on the domain, we establish the a priori estimates and derive the existence and uniqueness of admissible $C^{1,1}$ solutions under the condition $f^{\frac{1}{k-l}}\in C^{1,1}(\overline{\Omega}\times\mathbb{R})$.
\end{abstract}

\maketitle
\section{Introduction}
This article is devoted to the study of oblique boundary value problems for degenerate Hessian quotient type equations
\begin{equation}\label{eq07081}
\left\{
\begin{aligned}
  &\frac{\sigma_{k}}{\sigma_{l}}(\gamma\triangle u I-D^{2}u)=f(x,u),\,\,\,\text{in}\,\;\Omega,\\
  &G(x,u,Du)=0,\,\,\,\,\quad\quad\quad\quad\quad\text{on}\,\;\partial\Omega,
\end{aligned}
\right.
\end{equation}
where $\Omega$ is a smooth bounded domain in $\mathbb{R}^{n}$, $\gamma\geq1$, $0\leq l<k\leq n$, $Du$ and $D^2 u$ denote the gradient vector and the Hessian matrix of the solution $u$. Here $f(x,u)$ is a non-negative function defined on $\overline{\Omega}\times\mathbb{R}$.

For convenience, let $x,z,p$ be points in $\Omega, \mathbb{R}, \mathbb{R}^n$, respectively. The boundary value condition $G\in C^1(\partial\Omega \times \mathbb{R}\times \mathbb{R}^n)$ is said to be strictly oblique with respect to $u$, if
\begin{equation}\label{eq1002}
G_p(x,u,Du)\cdot \nu \geq \beta_0>0,\,\,\text{for all}\,\;(x,u,Du)\in \partial\Omega \times \mathbb{R}\times \mathbb{R}^n,
\end{equation}
where $\nu$ is the unit inner normal vector field on $\partial\Omega$ and $\beta_{0}$ is a positive constant.
If $G_{p}\cdot\nu > 0$ on all of $\partial\Omega\times\mathbb{R}\times\mathbb{R}^{n}$, we will simply refer to $G$ as oblique.
As in Jiang-Trudinger-Xiang \cite{JTX16}, we assume $G\in C^{2}(\partial\Omega\times\mathbb{R}\times\mathbb{R}^{n})$ is concave in $p$ if $G_{ pp} \leq0$ on $\partial\Omega\times\mathbb{R}\times\mathbb{R}^{n}$. This includes the quasilinear oblique case, when $G_{ pp}= 0$
\begin{equation}\label{eq1003}
G= \beta(x,u)\cdot p-\varphi(x,u)=0,\quad\text{on}\;\partial\Omega \times \mathbb{R}\times \mathbb{R}^n,
\end{equation}
where $\beta(x,u)=G_p$, $p=Du$ and $\varphi(x,u)$ are defined on $\partial\Omega\times\mathbb{R}$.
In this paper, we focus on the semilinear oblique  boundary condition where $\beta$ in \eqref{eq1003} is independent of $u$ with $p=Du$ s.t.
\begin{equation}\label{eq201}
  G=\beta(x)\cdot p-\varphi(x,u)=0, \quad\text{on}\ \partial \Omega\times \mathbb{R},
\end{equation}
which is equivalent to $u_{\beta}=\varphi(x,u)$. The standard example of \eqref{eq201} is the semilinear Neumann boundary condition, where $\beta=\nu$ on $\partial\Omega$.

Next, we recall some definitions about Hessian equations.
\begin{definition}\label{df101}
Assume $\lambda=(\lambda_1,\dots,\lambda_n)\in\mathbb{R}^n$, the elementary symmetric function for $k=1,2,\cdots,n$ is defined by
\begin{equation*}
\sigma_k(\lambda)= \sum _{1 \le i_1 < i_2 <\cdots<i_k\leq
n}\lambda_{i_1}\lambda_{i_2}\cdots\lambda_{i_k},
\end{equation*}
 specially, denote $\sigma_0=1$ and $\sigma_k=0$ for $k>n$ or $k<0$.
\end{definition}
\begin{definition}\label{df102}
We say $u\in C^2(\Omega)$ is $k$-admissible if $\lambda(\gamma\triangle uI-D^2 u)(x)\in \Gamma_k$ for any $x\in \Omega$, and the G$\mathring{\text{a}}$rding's cone $\Gamma_k$ is
\begin{equation*}
\Gamma_k=\{\lambda\in \mathbb{R}^n:\sigma_i(\lambda)>0, \forall \, 1\leq i \leq k\}.
\end{equation*}
\end{definition}

Hessian quotient type equations
\begin{equation}\label{eq03091}
\frac{\sigma_{k}}{\sigma_{l}}(\gamma\triangle u I-D^{2}u)=f
\end{equation}
originate from many geometric problems and play an important role in conformal geometry, complex geometry. Equation \eqref{eq03091} is called the $(n-1)$ Monge-Amp\`ere equation which was introduced by Harvey-Lawson in a series of papers \cite{HL11, HL12, HL13} when $k=n$, $l=0$ and $\gamma=1$. And more information can be consulted in \cite{Guan21,FW,FWW,Ger03,P,TW17,To19} and references therein.

For non-degenerate cases, 
the Dirichlet problem for basic Hessian equations has been well studied  by Caffarelli-Nirenberg-Spruck \cite{CNS85}, Ivochkina \cite{Iv87} and Trudinger \cite{Tr95}.
Some works about the Neumann problem for basic Hessian equations can be found in \cite{CZ21,MQ19,QX19,Tr87}. Dong-Wei \cite{DW22} have established the a priori estimates to equation \eqref{eq03091} 
when  $\gamma=1$, $\lambda(\triangle uI-D^2u)(x) \in\Gamma_{k+1}$. Then
Chen-Dong-Han \cite{CDH23} showed that the existence and uniqueness of the $k$-admissible solution to equation \eqref{eq03091} with $\gamma=1$, $0\leq l<k<n$.

We turn our attention to the degenerate case, where such estimates become more delicate and require refined techniques. The a priori $C^{1,1}$ estimates remain pivotal in establishing the regularity of solutions for these degenerate problems.
For degenerate Monge-Amp\`ere equations, foundational work by Guan-Li \cite{GL94,GL97} on the degenerate Weyl problem and Gauss curvature measure established under the following conditions which are critical to global $C^{1,1}$ estimates
\begin{equation}\label{eq06302}
\Delta(f^{1/(n-1)})\geq-A, \quad |D(f^{1/(n-1)})|\leq A,
\end{equation}
where $A$ is a constant.
Guan \cite{Guan97} found that conditions \eqref{eq06302} are also sufficient for the Dirichlet problem of degenerate Monge-Amp\`ere equations with homogenous boundary. The non-homogeneous boundary case was resolved by Guan-Trudinger-Wang \cite{GTW99} under the stronger assumption $f^{\frac{1}{n-1}}\in C^{1,1}(\overline{\Omega})$, which implicitly enforces \eqref{eq06302}.
For the Dirichlet problem of degenerate Hessian equations,
Dong \cite{Dong06} established $C^{1,1}$ estimates to homogeneous boundary under conditions
\begin{equation*}
f^{\frac{1}{k-1}}\in C^{1,1}(\overline{\Omega}), \quad |D(f^{\frac{1}{k-1}})|\leq C f^{\frac{1}{2(k-1)}}.
\end{equation*}
Recently, $C^{1,1}$ estimates to convex solutions with non-homogenous boundary in the strictly convex bounded domain was solved by Jiao-Wang \cite{JW24}, requiring the condition $f^{\frac{1}{k-1}}\in C^{1,1}(\overline{\Omega})$.
For more references, the reader
may see \cite{CNS86,GuanB,WangXu,Mei22} and related works cited there.

Therefore, it is of natural interest to consider existence results for the oblique boundary value problem. The oblique boundary value problem for  Hessian type equations arises naturally in the theory of fully nonlinear elliptic equations, through its applications in
conformal geometry, optimal transportation, and geometric optics.
Wang \cite{Wang92} established the existence of solutions for two dimensional Monge-Amp\`ere equations and subsequently considered general dimensions. Urbas \cite{Urbas95,Urbas96,Urbas98} studied Monge-Amp\`{e}re equations, two dimensional Hessian equations and curvature equations of the oblique boundary value problem.
The oblique boundary value problem of linear and quasilinear elliptic equations can be seen in the book \cite{Lie}.
Lieberman-Trudinger \cite{LT86} investigated the nonlinear oblique boundary value problem for quasilinear and fully nonlinear uniformly elliptic partial differential equations under some structure conditions. For augmented Hessian equations with the oblique boundary value problem
\begin{eqnarray*}
\begin{cases}
  F(D^{2}u-A(x,z,p))=B(x,z,p),\,\,\,\text{in}\,\;\Omega,\\
  G(x,z,p)=0,\,\,\,\,\quad\quad\quad\quad\quad\quad\quad\quad\text{on}\,\;\partial\Omega,
\end{cases}
\end{eqnarray*}
a series works of Jiang-Trudinger \cite{JT1,JT2,JT3} considered existence results by formulating some conditions on $A$ and $F$,  which provided a comprehensive framework for studying the oblique boundary value problem for a class of fully nonlinear equations.
Then Wang \cite{Wph22} derived global gradient estimates for admissible solutions to Hessian equations in $\mathbb{R}^n$ with the oblique boundary value problem.

Following these developments, we investigate the oblique boundary value problem for degenerate Hessian quotient type equations
under the assumption of subsolutions $\underline{u}$ with $f^{\frac{1}{k-l}}\in C^{1,1}(\overline{\Omega}\times\mathbb{R})$, $0\leq l<k<n$ and $\gamma\geq 1$.

\begin{theorem}\label{Th1}
Let $\Omega\subset\mathbb{R}^{n}$ be a bounded $C^{4}$ domain, $\varphi(x,u)$ be smooth functions, $f^{\frac{1}{k-l}}\in C^{1,1}(\overline{\Omega}\times\mathbb{R})$, $f_u\geq 0$, $\varphi_u\geq \gamma_0>0$. If there exists a subsolution $\underline{u}\in  C^2(\overline{\Omega})$ satisfying
\begin{equation}\label{eq091801}
\left\{
\begin{aligned}
  &\frac{\sigma_{k}}{\sigma_{l}}(\gamma\triangle \underline{u} I-D^{2}\underline{u})\geq f(x,\underline{u}),\quad \text{in}\,\;\Omega,\\
  &\underline{u}_\beta=\varphi(x,\underline{u}),\quad\quad \quad\quad\quad\quad\quad\,\, \text{on}\,\;\partial\Omega,
\end{aligned}
\right.
\end{equation}
and a unique admissible solution $u\in  C^4(\overline{\Omega})$ satisfying
\begin{equation}\label{eq1001}
\left\{
\begin{aligned}
  &\frac{\sigma_{k}}{\sigma_{l}}(\gamma\triangle u I-D^{2}u)=f(x,u),\quad \text{in}\,\;\Omega,\\
  &u_\beta=\varphi(x,u),\quad\quad \quad\quad\quad\quad\quad\,\,\text{on}\,\;\partial\Omega,
\end{aligned}
\right.
\end{equation}
where $\langle\beta,\nu\rangle\geq\beta_0>0$, $\gamma\geq 1$, $0\leq l<k<n$. Then
\begin{equation}\label{eq091901}
|u|_{C^{1,1}(\overline{\Omega})}\leq C,
\end{equation}
where the positive constant $C$ depends on $n,\ k,\ l,\ \gamma_0,\ \gamma,\ \beta_0,\ \overline{\Omega}$, $G$, $|\underline{u}|_{C^0}$, $|f^{\frac{1}{k-l}}|_{C^{1,1}}$.
\end{theorem}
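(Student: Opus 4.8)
\emph{Proof proposal.} The plan is to run the classical three–step a priori estimate programme ($C^{0}$, then $C^{1}$, then $C^{1,1}$) for the oblique problem \eqref{eq1001}, the decisive device being to rewrite \eqref{eq1001} in renormalised form
\[
\tF(W[u])=f^{\frac{1}{k-l}}(x,u),\qquad
\tF:=\Bigl(\tfrac{\sigma_{k}}{\sigma_{l}}\Bigr)^{\frac{1}{k-l}},\quad W[u]:=\gamma\triangle u\,I-D^{2}u,
\]
so that $\tF$ is smooth, strictly increasing, degree–one homogeneous and \emph{concave} on $\Gamma_{k}$, while the right-hand side now lies in $C^{1,1}(\overline{\Omega}\times\mathbb{R})$. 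Writing $\tF^{ij}=\partial\tF/\partial w_{ij}$, put $\widehat F^{ij}:=\gamma(\tr\tF)\delta_{ij}-\tF^{ij}$ and $\mathcal L\phi:=\widehat F^{ij}\phi_{ij}=\tF^{ij}\bigl(\gamma\triangle\phi\,\delta_{ij}-\phi_{ij}\bigr)$; since $\tF^{ij}>0$ on $\Gamma_{k}$ (here $k>l$ is used) and $\gamma\ge1$, the matrix $(\widehat F^{ij})$ is positive definite, so $\mathcal L$ is a (possibly degenerate) linear elliptic operator annihilating constants, with $\mathcal L u=\tF(W[u])=f^{\frac1{k-l}}(x,u)$ by Euler's identity. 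We shall also use repeatedly that a nonnegative $g\in C^{1,1}$ satisfies $|Dg|^{2}\le 2\|D^{2}g\|_{L^{\infty}}\,g$, applied to $g=f^{\frac1{k-l}}$: this is the inequality that tames the degeneracy.

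\emph{$C^{0}$ and $C^{1}$ bounds.} From $f_{u}\ge0$, $\varphi_{u}\ge\gamma_{0}>0$, the obliqueness $\langle\beta,\nu\rangle\ge\beta_{0}$ and the ellipticity of $\mathcal L$, a comparison argument with $\underline u$ as a barrier via \eqref{eq091801} (at a boundary extremum of $u-\underline u$ the obliqueness combined with $\varphi_{u}\ge\gamma_{0}$ is what closes the argument) gives $|u|_{C^{0}(\overline\Omega)}\le C$, and the same structure yields uniqueness. For the gradient I would run a Bernstein argument on a test function of the form $\psi(u)|Du|^{2}$ globally and control $|Du|$ on $\partial\Omega$ by differentiating $u_{\beta}=\varphi(x,u)$ tangentially and using $\underline u$ as a boundary barrier, as in \cite{JTX16,Wph22}, obtaining $|Du|_{C^{0}(\overline\Omega)}\le C$. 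Neither step feels the degeneracy, since both need only $f\ge0$ with mild regularity.

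\emph{Reduction of the Hessian bound to $\partial\Omega$.} Differentiating $\tF(W[u])=f^{\frac1{k-l}}(x,u)$ twice in a direction $\xi\in\uS$ and discarding the nonpositive concavity term gives
\[
\mathcal L u_{\xi\xi}=\tF^{ij}W[u]_{ij,\xi\xi}\ \ge\ \bigl(f^{\frac1{k-l}}\bigr)_{u}u_{\xi\xi}-C ,
\]
where $C$ depends only on $|f^{\frac1{k-l}}|_{C^{1,1}}$ and the gradient bound; it is essential that the inhomogeneity of the renormalised equation, not $f$ itself, has bounded second derivatives. Feeding this into $\mathcal L$ applied to an auxiliary function $v(x)=u_{\xi\xi}(x)+K|Du(x)|^{2}$ with $K$ large (plus a boundary cut-off if needed), and absorbing the third-order terms generated by $\mathcal L(|Du|^{2})$ via $\mathcal L u_{k}=D_{k}\!\bigl(f^{\frac1{k-l}}(x,u)\bigr)-(\partial_{k}\widehat F^{ij})u_{ij}$, the concavity of $\tF$, and $|D(f^{\frac1{k-l}})|^{2}\le C f^{\frac1{k-l}}$, one shows that $\max_{\overline\Omega\times\uS}v$ is either bounded a priori or attained on $\partial\Omega$. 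As $\max_{\xi}u_{\xi\xi}$ controls $|D^{2}u|$ up to the gradient bound—using also $\triangle u>0$, forced by $\lambda(W[u])\in\Gamma_{1}$—the global estimate reduces to $\max_{\partial\Omega}|D^{2}u|\le C$.

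\emph{Boundary Hessian estimate; the main obstacle.} Fix $x_{0}\in\partial\Omega$ and principal coordinates. The mixed components $u_{\tau\nu}(x_{0})$ come from differentiating $u_{\beta}=\varphi(x,u)$ once along $\partial\Omega$; the double normal derivative $u_{\nu\nu}(x_{0})$ is bounded below by admissibility ($\triangle u>0$) and above because, since $k<n$, at least $k$ eigenvalues of $W[u](x_{0})$ are comparable to $\gamma\triangle u$, so $\tfrac{\sigma_{k}}{\sigma_{l}}(W[u](x_{0}))\ge c\,(u_{\nu\nu})_{+}^{\,k-l}$ cannot exceed the bounded value $f(x_{0},u(x_{0}))$. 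The genuinely delicate point—and the step I expect to be the main obstacle—is the tangential–tangential bound $u_{\tau\tau}(x_{0})$ for $\tau\in T_{x_{0}}\partial\Omega$: unlike the Dirichlet case it is not given by the boundary data, and the oblique barrier constructions of \cite{LT86,JTX16} must be carried out for the degenerate operator $\mathcal L$, whose ellipticity can collapse where $f$ vanishes. I would build a local barrier from $\underline u-u$, the defining function of $\Omega$ and $|x-x_{0}|^{2}$: concavity of $\tF$ with the subsolution inequality \eqref{eq091801} gives $\tF^{ij}(W[u]-W[\underline u])_{ij}\ge f^{\frac1{k-l}}(x,u)-f^{\frac1{k-l}}(x,\underline u)\ge0$, so $\mathcal L(\underline u-u)$ has a favourable sign; $|D(f^{\frac1{k-l}})|^{2}\le C f^{\frac1{k-l}}$ handles the residual degenerate terms, and the obliqueness together with $\varphi_{u}\ge\gamma_{0}$ controls the barrier along $\partial\Omega$. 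Combining the three steps gives $|D^{2}u|_{C^{0}(\overline\Omega)}\le C$, hence \eqref{eq091901}.
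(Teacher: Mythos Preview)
Your overall architecture—renormalize to $\widetilde F=(\sigma_k/\sigma_l)^{1/(k-l)}$, exploit its concavity, run $C^0\to C^1\to C^2$, and reduce the interior Hessian bound to the boundary via an auxiliary function—matches the paper, and your treatment of $C^0$, $C^1$, the interior reduction, and the mixed derivative $u_{\tau\beta}$ is essentially correct.

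The divergence, and the gap, is in the remaining boundary second-order estimates. The paper does \emph{not} use the subsolution $\underline u$ anywhere beyond the $C^0$ bound (this is stated explicitly in the Remark after Theorem~\ref{Th1}). For the double tangential derivative it follows the Jiang--Trudinger scheme: set $v_\tau=u_{\tau\tau}+\tfrac12 u_\tau^2$, correct it to a function $\overline v$ whose oblique derivative at a boundary maximum is forced positive by differentiating $G=0$ twice tangentially (using $G_{pp}\le 0$), and apply $F^{ij}\partial_{ij}$ at the resulting interior maximum to get $\sup_{\partial\Omega}u_{\tau\tau}\le \varepsilon M_2+C_\varepsilon$. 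For the double oblique derivative it compares $\overline V=G(x,u,Du)+\tfrac{a_1}{2}|Du-Du(x_0)|^2$ with a barrier $-c\phi+\tfrac{a_2}{2}|x-x_0|^2$ to obtain $u_{\beta\beta}\le\varepsilon M_2+C_\varepsilon(1+M_2')$; the two $\varepsilon$-inequalities then close against each other. Your proposal instead builds a barrier from $\underline u-u$ to bound $u_{\tau\tau}$, but a barrier of that shape controls the \emph{normal derivative} of a scalar at a boundary point, not a pure tangential second derivative of $u$; I do not see how it delivers $u_{\tau\tau}$ without an auxiliary-function mechanism of the type above. Your algebraic bound for $u_{\nu\nu}$ via $\sigma_k/\sigma_l\gtrsim u_{\nu\nu}^{k-l}$ is reasonable, but it presupposes the tangential bound, so the order in which you present the two steps would have to be reversed. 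Finally, your concern that the ellipticity ``can collapse where $f$ vanishes'' is not the operative issue for these estimates: the structure $W[u]=\gamma\triangle u\,I-D^2u$ with $\gamma\ge1$ forces $F^{ii}\ge c\sum_j F^{jj}$ (Lemma~\ref{le3}(3)), so in the key differential inequalities every term carries the common factor $\sum_i F^{ii}$ and the degeneracy cancels; this ratio-uniform ellipticity, rather than any subsolution barrier, is what makes the boundary $C^2$ estimates go through in the degenerate case.
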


\begin{remark}
The admissible subsolution hypothesis and conditions $f_u\geq 0$, $\varphi_u\geq \gamma_0>0$ are required only for the maximum modulus estimates. Furtherly, When $\gamma>1$ in problem \eqref{eq1001}, the existence theorem extends to the case $k=n$.
\end{remark}

\begin{remark}
When the matrix $\left\{\gamma \triangle u I-D^2 u\right\}$ ($\gamma \geq 1$) is replaced by $\left\{\gamma \triangle u I+D^2 u\right\}$ ($\gamma > 0$) in problem \eqref{eq1001}, the existence theorem remains valid for the problem
\begin{equation}\label{eq091803}
\left\{
\begin{aligned}
  &\frac{\sigma_{k}}{\sigma_{l}}(\gamma\triangle u I+D^{2}u)=f(x,u),\,\,\,\text{in}\,\;\Omega,\\
  &u_\beta=\varphi(x,u),\quad\quad \quad\quad\quad\quad\quad\,\text{on}\,\;\partial\Omega,
\end{aligned}
\right.
\end{equation}
where $0\leq l<k\leq n$.
\end{remark}

The remainder of this paper is organized as follows. In Section 2, we introduce basic notations and establish preliminary results for Hessian quotient type equations. Section 3 is devoted to deriving $C^0$ estimates under the assumption of admissible subsolutions with $f_u \geq 0$ and $\varphi_u \geq \gamma_0 > 0$, followed by $C^1$ estimates under semilinear oblique boundary conditions. Finally, Section 4 presents the crucial second order estimates that complete the proof of Theorem \ref{Th1}.


\section{Preliminaries}
In this section, we recall some basic properties of Hessian quotient type equations, which can be found in \cite{C-P,CDH23,DW22,LT94,Sp05}.
For convenience, we define
$\lambda=(\lambda_{1},\lambda_{2},\cdots,\lambda_{n})$, $\eta=(\eta_{1},\eta_{2},\cdots,\eta_{n})$ are eigenvalue vectors of matrix $D^{2}u$ and $U=\gamma\triangle uI-D^{2}u$, respectively, set
\begin{eqnarray*}
F(u_{ij})=\frac{\sigma_{k}(U_{ij})}{\sigma_{l}(U_{ij})},\quad F^{ij}=\frac{\partial F}{\partial u_{ij}},\quad F^{ij,rs}=\frac{\partial^{2}F}{\partial u_{ij}\partial u_{rs}},
\end{eqnarray*}
\begin{equation*}
  \widetilde{F}(u_{ij})=\left[\frac{\sigma_{k}(U_{ij})}{\sigma_{l}(U_{ij})}\right]^{\frac{1}{k-l}},\quad \widetilde{F}^{ij}=\frac{\partial \widetilde{F}}{\partial u_{ij}},\quad \widetilde{F}^{ij,rs}=\frac{\partial^{2}\widetilde{F}}{\partial u_{ij}\partial u_{rs}},
\end{equation*}
thus Hessian quotient type equations in problem (\ref{eq1001}) can be written as
\begin{equation}\label{eq0904201}
\widetilde{F}(u_{ij})=F^{\frac{1}{k-l}}(u_{ij})=\left[\frac{\sigma_{k}(U_{ij})}{\sigma_{l}(U_{ij})}\right]^{\frac{1}{k-l}}
=f^{\frac{1}{k-l}}(x,u)=\widetilde{f}(x,u).
\end{equation}
Differentiating equation \eqref{eq0904201} twice with respect to vector fields $\xi=(\xi^{1},\xi^{2},\cdots,\xi^{n})$, we have
\begin{eqnarray}
  F^{ij}u_{ij\xi}=\partial_{\xi}f=f_{x_{\xi}}+f_{z}u_{\xi},\label{eq0904203}\\
  \widetilde{F}^{ij}u_{ij\xi}=\partial_{\xi}\widetilde{f}=\widetilde{f}_{x_{\xi}}+\widetilde{f}_{z}u_{\xi},\label{eq0913202}
\end{eqnarray}
and
\begin{eqnarray}
  F^{ij,rs}u_{ij\xi}u_{rs\xi}+F^{ij}u_{ij\xi\xi}=\partial_{\xi\xi}f=f_{x_{\xi}x_{\xi}}+2f_{x_{\xi}z}u_{\xi}+f_{zz}u_{\xi}^{2}
  +f_{z}u_{\xi\xi},\label{eq0904204}\\
  \widetilde{F}^{ij,rs}u_{ij\xi}u_{rs\xi}+\widetilde{F}^{ij}u_{ij\xi\xi}=\partial_{\xi\xi}\widetilde{f}=\widetilde{f}_{x_{\xi}x_{\xi}}
   +2\widetilde{f}_{x_{\xi}z}u_{\xi}+\widetilde{f}_{zz}u_{\xi}^{2}+\widetilde{f}_{z}u_{\xi\xi}.\label{eq091902}
\end{eqnarray}




The following lemmas can be referenced to \cite{CDH23,LT94,Sp05}.

\begin{lemma}\label{le1}
If $\lambda=(\lambda_1,\dots,\lambda_n)\in\Gamma_k$ with $\lambda_1\geq\lambda_2\geq\cdots\geq\lambda_n$ for $1\leq k\leq n$,
then
\begin{equation*}
  \sigma_{k-1}(\lambda|k)\geq C(n,k)\sigma_{k-1}(\lambda),\quad\sigma_{k-1}(\lambda|k)\geq C(n,k)\sum_{i=1}^n\sigma_{k-1}(\lambda|i),
\end{equation*}
where $\sigma_{k-1}(\lambda|i)=\frac{\partial\sigma_k}{\partial \lambda_i}$ and $C(n,k)$ is a positive constant only depending on $n$ and $k$.
\end{lemma}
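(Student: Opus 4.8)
For $k=1$ both inequalities are trivial since $\sigma_0\equiv 1$, so assume $k\ge 2$. The plan is to first dispose of the soft combinatorial content, reducing the lemma to a single scalar comparison, and then to attack that comparison using the full strength of the cone condition.

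\textbf{Reductions.} Recall $\sigma_{k-1}(\lambda|i)=\partial\sigma_k/\partial\lambda_i$. From the standard theory of the G\aa rding cone (see \cite{CNS85,Sp05}), $\lambda\in\Gamma_k$ implies $(\lambda|i)\in\Gamma_{k-1}$ and $(\lambda|ij)\in\Gamma_{k-2}$, hence $\sigma_m(\lambda|i)>0$ for $m\le k-1$ and $\sigma_m(\lambda|ij)>0$ for $m\le k-2$; in particular every $\sigma_{k-1}(\lambda|i)>0$. I would use the counting identity
\begin{equation*}
\sum_{i=1}^n\sigma_{k-1}(\lambda|i)=(n-k+1)\,\sigma_{k-1}(\lambda)
\end{equation*}
together with the single-variable expansion $\sigma_{k-1}(\lambda)=\sigma_{k-1}(\lambda|i)+\lambda_i\sigma_{k-2}(\lambda|i)$; writing the latter for two indices and subtracting gives $\sigma_{k-1}(\lambda|i)-\sigma_{k-1}(\lambda|j)=(\lambda_j-\lambda_i)\,\sigma_{k-2}(\lambda|ij)$, which, since $\sigma_{k-2}(\lambda|ij)>0$ and $\lambda_1\ge\cdots\ge\lambda_n$, yields the monotonicity $0<\sigma_{k-1}(\lambda|1)\le\cdots\le\sigma_{k-1}(\lambda|n)$. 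With these in hand the second assertion follows at once from the first, because $\sum_i\sigma_{k-1}(\lambda|i)=(n-k+1)\sigma_{k-1}(\lambda)\le(n-k+1)\,C(n,k)^{-1}\,\sigma_{k-1}(\lambda|k)$. Hence it suffices to prove $\sigma_{k-1}(\lambda|k)\ge C(n,k)\,\sigma_{k-1}(\lambda)$.

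\textbf{The scalar comparison.} Starting from $\sigma_{k-1}(\lambda)=\sigma_{k-1}(\lambda|k)+\lambda_k\,\sigma_{k-2}(\lambda|k)$, I would split on the sign of $\lambda_k$. If $\lambda_k\le 0$, then $\lambda_k\sigma_{k-2}(\lambda|k)\le 0$ (since $\sigma_{k-2}(\lambda|k)>0$), so $\sigma_{k-1}(\lambda)\le\sigma_{k-1}(\lambda|k)$, i.e. the estimate holds with constant $1$. If $\lambda_k>0$, then $\lambda_1\ge\cdots\ge\lambda_k>0$, and (normalising $\lambda_k=1$ by homogeneity) the claim reduces to a bound of the form $\sigma_{k-2}(\lambda|k)\le C(n,k)\,\sigma_{k-1}(\lambda|k)$. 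I would derive this from the full cone condition $\sigma_k(\lambda)>0$ together with $(\lambda|k)\in\Gamma_{k-1}$ and the Newton--Maclaurin inequalities for the reduced vector, e.g. by induction on $k$: peel off $\lambda_1$ via $\sigma_{k-1}(\lambda|k)=\sigma_{k-1}(\lambda|1k)+\lambda_1\sigma_{k-2}(\lambda|1k)$ and the parallel splitting of $\sigma_{k-2}(\lambda|k)$, absorbing at each stage the positive lower-order pieces guaranteed by membership in $\Gamma_{k-2},\Gamma_{k-3},\dots$; alternatively one may reduce the extremal configuration to vectors supported on $k$ coordinates, where the bound is elementary. (For $k=n$ this is transparent: after the normalisation all $\lambda_i\ge\lambda_n=1$, so $\sigma_{n-2}(\lambda|n)/\sigma_{n-1}(\lambda|n)=\sum_{i\le n-1}\lambda_i^{-1}\le n-1$ and $C(n,n)=1/n$.)

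\textbf{Main obstacle.} The only genuine content is the case $\lambda_k>0$, i.e. bounding $\sigma_{k-2}(\lambda|k)/\sigma_{k-1}(\lambda|k)$ by a constant depending only on $n,k$. I emphasise that the ordering hypothesis alone is insufficient: for $n=3$, $k=2$ the vector $\lambda=(1,1,-1+\delta)$ is decreasingly ordered and lies in $\Gamma_1$, yet $\sigma_1(\lambda|2)/\sigma_1(\lambda)=\delta/(1+\delta)\to 0$ as $\delta\to 0$; the estimate survives precisely because such vectors leave $\Gamma_2$ (here $\sigma_2(\lambda)=-1+2\delta<0$). Thus the argument must use $\sigma_k(\lambda)>0$ in an essential way, and the extremal value of $C(n,k)$ is controlled by the behaviour near $\partial\Gamma_k$ (for $n=3$, $k=2$ it equals $1/3$, realised in the limit along $\lambda=(t,1-t,-t(1-t))$). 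Carrying out this quantitative estimate is where the work lies; the rest is the soft reduction above.
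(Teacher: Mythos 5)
There is a genuine gap. First, note for context that the paper itself offers no proof of this lemma: it is quoted as a known result with the citation \cite{CDH23,LT94,Sp05} (it is essentially an inequality of Lin--Trudinger), so your argument must stand on its own. Its soft part is correct: the identity $\sum_{i=1}^n\sigma_{k-1}(\lambda|i)=(n-k+1)\sigma_{k-1}(\lambda)$, the monotonicity $\sigma_{k-1}(\lambda|1)\le\cdots\le\sigma_{k-1}(\lambda|n)$ deduced from $\sigma_{k-1}(\lambda|i)-\sigma_{k-1}(\lambda|j)=(\lambda_j-\lambda_i)\sigma_{k-2}(\lambda|ij)>0$-type reasoning, the equivalence of the two stated inequalities, and the reduction via $\sigma_{k-1}(\lambda)=\sigma_{k-1}(\lambda|k)+\lambda_k\sigma_{k-2}(\lambda|k)$ to the single estimate $\lambda_k\,\sigma_{k-2}(\lambda|k)\le C(n,k)\,\sigma_{k-1}(\lambda|k)$ are all fine. (Incidentally, the case $\lambda_k\le 0$ is vacuous: deleting the $k-1$ largest entries of $\lambda\in\Gamma_k$ leaves a vector in $\Gamma_1$, so $\lambda_k+\cdots+\lambda_n>0$ and hence $\lambda_k>0$.) But that single estimate \emph{is} the lemma; everything else is bookkeeping. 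At exactly this point you give only two unexecuted sketches --- ``peel off $\lambda_1$ and absorb positive lower-order pieces,'' with no inductive statement formulated, and ``reduce the extremal configuration to vectors supported on $k$ coordinates'' --- and you acknowledge that carrying out the quantitative estimate ``is where the work lies.'' As written this is a plan, not a proof, and the missing step is precisely the place where $\sigma_k(\lambda)>0$ must be used quantitatively, as your own $\Gamma_1$ example correctly shows.

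Moreover, the second sketch conflicts with your own analysis: for $n=3$, $k=2$ the extremal ratio $1/3$ is attained (in the limit) at $\lambda=(2/3,2/3,-1/3)$, where all three coordinates are nonzero, whereas vectors supported on $k=2$ coordinates, say $(a,b,0)$ with $a\ge b\ge 0$, give $\sigma_1(\lambda|2)/\sigma_1(\lambda)=a/(a+b)\ge 1/2$. So the extremal behaviour lives exactly in configurations that such a reduction would discard, and it is unclear what ``reducing to'' them would rigorously mean. The checks you do carry out (failure under the ordering hypothesis alone, the $k=n$ computation, the value $1/3$ for $n=3,k=2$) are correct sanity checks but not a substitute. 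To complete the argument you should either reproduce the known proof (see \cite{LT94}, and the expositions in \cite{C-P,CDH23} that the paper points to), or execute your peeling induction in full with a precisely stated inductive hypothesis that controls $\lambda_k\sigma_{k-2}(\lambda|k)$ by $\sigma_{k-1}(\lambda|k)$ at every stage.
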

\begin{lemma}\label{le2}
For $\lambda\in\Gamma_k$, $n\geq k>l\geq 0$, $n\geq r>s\geq 0$, $k\geq r$ and $l\geq s$, we have
\begin{eqnarray*}
\left[\frac{\sigma_k(\lambda)/C_{n}^k}{\sigma_l(\lambda)/C_{n}^{l}}\right]^\frac{1}{k-l}\leq\left[\frac{\sigma_r(\lambda)/C_{n}^r}{\sigma_{s}(\lambda)/C_{n}^{s}}\right]^\frac{1}{r-s},
\end{eqnarray*}
and the equality holds if and only if $\lambda_1=\cdots=\lambda_n>0$.
\end{lemma}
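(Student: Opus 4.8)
\noindent\emph{Proof plan.}\ The strategy is to normalize the symmetric functions and reduce the statement to Newton's inequalities together with an elementary monotonicity property of geometric means. Set $p_j=p_j(\lambda):=\sigma_j(\lambda)/C_n^{j}$ for $0\le j\le n$, so that $p_0=1$ and the assertion reads
\begin{equation*}
\left(\frac{p_k}{p_l}\right)^{\frac{1}{k-l}}\le\left(\frac{p_r}{p_s}\right)^{\frac{1}{r-s}}.
\end{equation*}
Since $\lambda\in\Gamma_k$, we have $p_0,p_1,\dots,p_k>0$. Newton's classical inequalities, valid for every $\lambda\in\mathbb{R}^n$, give $p_{j-1}p_{j+1}\le p_j^{2}$ for $1\le j\le n-1$; dividing by $p_{j-1}p_j>0$ for $1\le j\le k-1$ shows that the numbers $a_j:=p_j/p_{j-1}$ satisfy $a_1\ge a_2\ge\cdots\ge a_k>0$.

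First I would use the telescoping identity $p_k/p_l=\prod_{j=l+1}^{k}a_j$, which exhibits $(p_k/p_l)^{1/(k-l)}$ as the geometric mean of $a_{l+1},\dots,a_k$ and, likewise, $(p_r/p_s)^{1/(r-s)}$ as the geometric mean of $a_{s+1},\dots,a_r$. Since $a_1\ge\cdots\ge a_k$, the geometric mean of a consecutive block $a_{i+1},\dots,a_j$ does not increase when the right endpoint $j$ is increased (one adjoins terms that are $\le$ the current mean) and does not increase when the left endpoint $i+1$ is increased (one discards terms that are $\ge$ the current mean). Equivalently, $Q(k,l):=(p_k/p_l)^{1/(k-l)}$ is non-increasing in $k$ for fixed $l$ and in $l$ for fixed $k$. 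Hence, using $k\ge r$ and $l\ge s$,
\begin{equation*}
\left(\frac{p_k}{p_l}\right)^{\frac{1}{k-l}}=Q(k,l)\le Q(k,s)\le Q(r,s)=\left(\frac{p_r}{p_s}\right)^{\frac{1}{r-s}},
\end{equation*}
and rewriting $p_j$ as $\sigma_j/C_n^{j}$ gives the asserted inequality.

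For the equality statement, I would first discard the vacuous case $(k,l)=(r,s)$ (where the two sides coincide identically); then $k\ge s+2$. If equality holds, both inner inequalities above are equalities, and following the equality through the first adjoin/delete step forces $a_{s+1}=a_{s+2}=\cdots=a_k$; in particular $a_{s+1}=a_{s+2}$, i.e.\ $p_sp_{s+2}=p_{s+1}^{2}$, with $p_s,p_{s+1},p_{s+2}>0$ because $s+2\le k$ gives $\lambda\in\Gamma_k\subseteq\Gamma_{s+2}$. The equality case of Newton's inequality on $\Gamma_{s+2}$ (as in \cite{LT94,Sp05}) then yields $\lambda_1=\cdots=\lambda_n$, and $\sigma_1(\lambda)>0$ forces the common value to be positive. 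Conversely, if $\lambda_1=\cdots=\lambda_n=c>0$ then $p_j=c^{j}$ and both sides equal $c$.

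Thus the inequality reduces to Newton's inequalities plus the decrease of a geometric mean of a monotone finite sequence when its averaging window is shifted to the right; the one point that calls for external input — and the \emph{main obstacle} — is the sharp (equality) form of Newton's inequality on $\Gamma_k$, which enters only in the equality characterization.
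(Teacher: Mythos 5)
Your proof is correct. Note that the paper does not prove Lemma~\ref{le2} at all: it simply refers the reader to \cite{CDH23,LT94,Sp05}, where this generalized Newton--Maclaurin inequality is standard. Your argument is essentially the textbook proof behind those citations: normalize to $p_j=\sigma_j/C_n^j$, use Newton's inequality $p_{j-1}p_{j+1}\le p_j^2$ (valid on all of $\mathbb{R}^n$) together with $p_1,\dots,p_k>0$ on $\Gamma_k$ to get the monotone ratios $a_j=p_j/p_{j-1}$, and then compare geometric means of nested blocks, which cleanly yields $Q(k,l)\le Q(k,s)\le Q(r,s)$. The steps I checked in detail all go through: the telescoping $p_k/p_l=\prod_{j=l+1}^k a_j$ is legitimate since $p_l>0$; the two monotonicity claims for the windowed geometric mean are correct for a non-increasing positive sequence; and your equality analysis is sound, including the observation that $(k,l)=(r,s)$ must be excluded (the lemma as stated is silent on this trivial case) and that $k\ge s+2$ then holds, so $\lambda\in\Gamma_{s+2}$ and $p_s,p_{s+1},p_{s+2}>0$, which rules out the degenerate equality cases of Newton's inequality (those require vanishing of the relevant $p_j$'s) and forces $\lambda_1=\cdots=\lambda_n$, with positivity from $\sigma_1>0$. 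The only external ingredient is the sharp (equality) form of Newton's inequality in the nondegenerate situation, which you correctly flag and which is exactly what \cite{LT94,Sp05} supply; so your write-up in fact provides more detail than the paper, at no loss of correctness.
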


\begin{lemma}\label{lem0913201}
If $\lambda=(\lambda_1,\lambda_2,\cdots,\lambda_n)\in\widetilde{\Gamma}_k$, then $\left[\frac{\sigma_{k}(\eta)}{\sigma_{l}(\eta)}\right]^{\frac{1}{k-l}}(0\leq l<k\leq n)$ are concave with respect to $\lambda$. Hence, for any $\xi\in\mathbb{R}^{n}$
\begin{equation}\label{eq07111}
  \sum_{i,j,r,s}\frac{\partial^{2}\left[\frac{\sigma_{k}(\eta)}{\sigma_{l}(\eta)}\right]}{\partial u_{ij}\partial u_{rs}}\leq\left(1-\frac{1}{k-l}\right)\frac{\left[\sum\limits_{i,j}\frac{\partial\left[\frac{\sigma_{k}(\eta)}{\sigma_{l}(\eta)}\right]}{\partial u_{ij}}\xi_{ij}\right]^{2}}{\frac{\sigma_{k}(\eta)}{\sigma_{l}(\eta)}},
\end{equation}
where $\widetilde{\Gamma}_{k}$ is an convex symmetric cone in $\mathbb{R}^{n}$, given by
$$\widetilde{\Gamma}_{k}=\left\{\lambda\in\mathbb{R}^{n}:\sigma_{i}(\eta)>0,\;\forall\; i=1,\cdots,k\right\}.$$
\end{lemma}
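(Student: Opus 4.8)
The plan is to reduce the concavity assertion of Lemma~\ref{lem0913201} to the classical concavity of $(\sigma_k/\sigma_l)^{1/(k-l)}$ on $\Gamma_k$, transport it through two linear substitutions, and then extract \eqref{eq07111} by differentiating a power. First I would recall from \cite{LT94,Sp05,CDH23} that the symmetric function $\mu\mapsto\bigl(\sigma_k(\mu)/\sigma_l(\mu)\bigr)^{1/(k-l)}$ is concave on the open convex symmetric cone $\Gamma_k$. The eigenvalues $\eta_i$ of $U=\gamma\triangle u\,I-D^2u$ are obtained from the eigenvalues $\lambda_i$ of $D^2u$ by the linear map $\eta_i=\gamma\sigma_1(\lambda)-\lambda_i$, a bijection of $\mathbb{R}^n$ commuting with coordinate permutations; hence $\widetilde{\Gamma}_k=\{\lambda:\eta(\lambda)\in\Gamma_k\}$ is the preimage of $\Gamma_k$, so it is again an open convex symmetric cone, on which $\sigma_i(\eta)>0$ for all $i\le k$ and in particular $\sigma_k(\eta)/\sigma_l(\eta)>0$. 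Precomposing the concave symmetric function above with $\lambda\mapsto\eta$ shows that $\lambda\mapsto\bigl(\sigma_k(\eta)/\sigma_l(\eta)\bigr)^{1/(k-l)}$ is concave on $\widetilde{\Gamma}_k$, which is the first claim.

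Next I would pass from eigenvalue arguments to symmetric-matrix arguments. By the standard fact, used throughout the Hessian literature (cf.\ \cite{LT94,Sp05}), that a symmetric function which is concave on a symmetric convex cone induces a concave function on the corresponding set of symmetric matrices, the map $U\mapsto\bigl(\sigma_k(U)/\sigma_l(U)\bigr)^{1/(k-l)}$ is concave in the entries of the symmetric matrix $U$; composing once more with the linear substitution $(u_{ij})\mapsto U_{ij}=\gamma\bigl(\sum_m u_{mm}\bigr)\delta_{ij}-u_{ij}$, which preserves concavity, shows that $\widetilde{F}(u_{ij})=\bigl(\sigma_k(U_{ij})/\sigma_l(U_{ij})\bigr)^{1/(k-l)}$ is concave in the Hessian entries on the admissible set. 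Hence $\widetilde{F}^{ij,rs}\xi_{ij}\xi_{rs}\le0$ for every symmetric $\xi=(\xi_{ij})$.

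Finally, writing $\widetilde{F}=F^{1/(k-l)}$ with $F=\sigma_k(U)/\sigma_l(U)>0$ and differentiating twice in the direction $\xi$,
\[
\widetilde{F}^{ij,rs}\xi_{ij}\xi_{rs}
=\frac{1}{k-l}\,F^{\frac{1}{k-l}-1}\left[F^{ij,rs}\xi_{ij}\xi_{rs}-\left(1-\frac{1}{k-l}\right)\frac{(F^{ij}\xi_{ij})^{2}}{F}\right].
\]
Since the prefactor $\tfrac{1}{k-l}F^{\frac{1}{k-l}-1}$ is positive and the left-hand side is $\le0$, the bracketed expression is $\le0$, which is precisely \eqref{eq07111} (whose left-hand side is to be read as the contraction $F^{ij,rs}\xi_{ij}\xi_{rs}=\sum_{i,j,r,s}\frac{\partial^{2}[\sigma_k(\eta)/\sigma_l(\eta)]}{\partial u_{ij}\partial u_{rs}}\xi_{ij}\xi_{rs}$).

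The whole weight of the argument rests on the classical concavity of the quotient power on $\Gamma_k$, which is quoted rather than reproved, so there is no serious obstacle; the only points that need checking are that the affine substitution $\eta_i=\gamma\sigma_1(\lambda)-\lambda_i$ really carries $\widetilde{\Gamma}_k$ into $\Gamma_k$ (immediate from the definition of $\widetilde{\Gamma}_k$) and that the concavity is insensitive to the value of $\gamma\ge1$. The passage between eigenvalue and matrix formulations and the algebra of the exponent $1/(k-l)$ are routine.
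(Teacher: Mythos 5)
The paper itself does not prove this lemma; it is quoted from the literature (the line preceding Lemma~\ref{le1} refers the reader to \cite{CDH23,LT94,Sp05}), so there is no paper proof to compare against. Your argument is correct and is the standard derivation one would extract from those references. The logical skeleton — (i) the classical concavity of $(\sigma_k/\sigma_l)^{1/(k-l)}$ on $\Gamma_k$ as a symmetric function of eigenvalues; (ii) the linear change of eigenvalues $\eta_i=\gamma\sigma_1(\lambda)-\lambda_i$, which is bijective, permutation-equivariant, and pulls $\Gamma_k$ back to $\widetilde{\Gamma}_k$, so concavity in $\lambda$ follows; (iii) the standard transfer principle from concave symmetric functions of eigenvalues to concave functions of symmetric matrices, composed with the linear map $(u_{ij})\mapsto U_{ij}$; (iv) the chain-rule identity
\[
\widetilde{F}^{ij,rs}\xi_{ij}\xi_{rs}
=\tfrac{1}{k-l}\,F^{\frac{1}{k-l}-1}\Bigl[F^{ij,rs}\xi_{ij}\xi_{rs}-\bigl(1-\tfrac{1}{k-l}\bigr)\tfrac{(F^{ij}\xi_{ij})^{2}}{F}\Bigr]
\]
combined with $\widetilde{F}^{ij,rs}\xi_{ij}\xi_{rs}\le 0$ and $F>0$ — is exactly right, including your observation that the displayed inequality \eqref{eq07111} is to be read with the missing contraction $\xi_{ij}\xi_{rs}$ on the left. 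The only points worth flagging explicitly if you were to write this up: the positivity $F>0$ on $\widetilde{\Gamma}_k$ (needed to divide by $F$ and to keep the prefactor positive) should be stated, and step (iii) deserves a named citation since it is the one nontrivial ingredient; both are implicit but correct in your sketch.
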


\begin{lemma}\label{le3}
If $\lambda=(\lambda_1,\lambda_2,\cdots,\lambda_n)\in\widetilde{\Gamma}_k$ with $\lambda_1\geq\lambda_2\geq\cdots\geq\lambda_n$, $\eta_i=\gamma \sum_{j=1}^n \lambda_j-\lambda_i$, $1\leq i\leq n$, for $0\leq l<k<n$ we obtain

\text{(1)}
$\frac{\partial\left[\frac{\sigma_k(\eta)}{\sigma_l(\eta)}\right]}{\partial\lambda_1}\leq\frac{\partial\left[\frac{\sigma_k(\eta)}{\sigma_l(\eta)}\right]}{\partial\lambda_2}
\leq\cdots\leq\frac{\partial\left[\frac{\sigma_k(\eta)}{\sigma_l(\eta)}\right]}{\partial\lambda_n},$

\text{(2)}
$\frac{\partial\left[\frac{\sigma_k(\eta)}{\sigma_l(\eta)}\right]}{\partial\eta_1}\geq\frac{\partial\left[\frac{\sigma_k(\eta)}{\sigma_l(\eta)}\right]}{\partial\eta_2}
\geq\cdots\geq\frac{\partial\left[\frac{\sigma_k(\eta)}{\sigma_l(\eta)}\right]}{\partial\eta_n},$

\text{(3)}
$\frac{\partial\left[\frac{\sigma_k(\eta)}{\sigma_l(\eta)}\right]}{\partial\lambda_i}\geq \frac{2\gamma-1}{n\gamma-1}C(n,k,l)\sum_{j=1}^{n}\frac{\partial\left[\frac{\sigma_k(\eta)}{\sigma_l(\eta)}\right]}{\partial\lambda_j},$

\text{(4)}
$\sum_{j=1}^{n}\frac{\partial\left[\frac{\sigma_k(\eta)}{\sigma_l(\eta)}\right]}{\partial\lambda_j}
\geq C(n,k,l)f^{1-\frac{1}{k-l}}.$
\end{lemma}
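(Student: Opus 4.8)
The four assertions are purely algebraic facts about $\sigma_k(\eta)/\sigma_l(\eta)$ under the linear substitution $\eta_i=\gamma\sum_j\lambda_j-\lambda_i$, so the plan is to reduce everything to known monotonicity properties of $\sigma_k/\sigma_l$ in the $\eta$-variables (which are standard, see \cite{LT94,Sp05}) and then track how the substitution reverses orderings. Write $Q(\eta)=\sigma_k(\eta)/\sigma_l(\eta)$ and $G(\lambda)=Q(\eta(\lambda))$. The key computational observation is the chain rule: since $\eta_i=\gamma\,\tr(\lambda)-\lambda_i$, we have $\partial\eta_j/\partial\lambda_i=\gamma-\delta_{ij}$, hence
\begin{equation*}
\frac{\partial G}{\partial\lambda_i}=\sum_{j=1}^n\frac{\partial Q}{\partial\eta_j}(\gamma-\delta_{ij})
=\gamma\sum_{j=1}^n\frac{\partial Q}{\partial\eta_j}-\frac{\partial Q}{\partial\eta_i}.
\end{equation*}
Since $\lambda\in\widetilde\Gamma_k$ means $\eta\in\Gamma_k$, and $\lambda_1\geq\cdots\geq\lambda_n$ forces $\eta_1\leq\cdots\leq\eta_n$, the classical fact that $\partial Q/\partial\eta_i$ is \emph{decreasing} in $\eta_i$ along an increasingly-ordered tuple in $\Gamma_k$ (more precisely, for $\eta_a\geq\eta_b$ one has $\partial Q/\partial\eta_a\leq\partial Q/\partial\eta_b$; this follows from the divided-difference identity $\sigma_{k}(\eta|a)-\sigma_k(\eta|b)=(\eta_b-\eta_a)\sigma_{k-1}(\eta|ab)$ applied to numerator and denominator and a quotient computation) gives (2) immediately. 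Then the displayed identity shows $\partial G/\partial\lambda_i$ is an increasing function of $\partial Q/\partial\eta_i\mapsto-\partial Q/\partial\eta_i$, so the ordering $\partial Q/\partial\eta_1\geq\cdots\geq\partial Q/\partial\eta_n$ translates into $\partial G/\partial\lambda_1\leq\cdots\leq\partial G/\partial\lambda_n$, which is (1).

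For (3), use the identity above to write $\sum_j\partial G/\partial\lambda_j=(n\gamma-1)\sum_j\partial Q/\partial\eta_j$ and $\partial G/\partial\lambda_i=\gamma\sum_j\partial Q/\partial\eta_j-\partial Q/\partial\eta_i$. Thus
\begin{equation*}
\frac{\partial G/\partial\lambda_i}{\sum_j\partial G/\partial\lambda_j}
=\frac{\gamma S-\partial Q/\partial\eta_i}{(n\gamma-1)S},\qquad S:=\sum_{j}\frac{\partial Q}{\partial\eta_j}>0.
\end{equation*}
The smallest value of the numerator over $i$ occurs at the index where $\partial Q/\partial\eta_i$ is largest, i.e.\ at $i=1$ (the $\eta$-component is smallest there), so it suffices to bound $\gamma S-\partial Q/\partial\eta_1$ from below by a fixed fraction of $(2\gamma-1)S$. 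Since $\eta\in\Gamma_k$, Lemma~\ref{le1} (applied with the ordered tuple $\eta$, whose largest partial derivative $\partial\sigma_k/\partial\eta_n=\sigma_{k-1}(\eta|n)$ controls the sum) yields $\partial Q/\partial\eta_1\leq C(n,k,l)^{-1}S$ — here one must additionally control the denominator $\sigma_l(\eta)$-contributions in the quotient rule, but since $\sigma_l(\eta|i)$ obeys the same comparison and $\partial Q/\partial\eta_i=(\sigma_{k-1}(\eta|i)\sigma_l-\sigma_k\sigma_{l-1}(\eta|i))/\sigma_l^2$, the same constant works after shrinking. Combining, $\gamma S-\partial Q/\partial\eta_1\geq(\gamma-c)S$ for suitable $c<1$ depending on $n,k,l$; a routine manipulation turns $(\gamma-c)/(n\gamma-1)$ into a bound of the claimed form $\frac{2\gamma-1}{n\gamma-1}C(n,k,l)$ (using $\gamma\geq1$ so that $2\gamma-1$ is comparable to $\gamma$).

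Assertion (4) is the lower bound on the trace of the Jacobian. The starting point is that along the equation $Q(\eta)=f^{(k-l)/(k-l)}\cdot$(constant)\dots more simply, $Q(\eta)$ itself has size $\approx f$, and the homogeneity degree of $Q$ in $\eta$ is $k-l$, so Euler's identity gives $\sum_i\eta_i\,\partial Q/\partial\eta_i=(k-l)Q(\eta)$. Translating to $\lambda$: since the map is linear with $\sum_i\partial G/\partial\lambda_i=(n\gamma-1)S$ and $S=\sum_i\partial Q/\partial\eta_i$, I would combine the Maclaurin-type inequality $\sigma_1(\eta)\,\sigma_{k-1}(\eta)\geq c_{n,k}\sigma_k(\eta)$ (equivalently a bound on $\sum_i\partial\sigma_k/\partial\eta_i$ in terms of $\sigma_k^{(k-1)/k}$, standard in $\Gamma_k$) with the corresponding denominator estimate to get $S\geq C(n,k,l)\,Q(\eta)^{1-1/(k-l)}=C(n,k,l)f^{1-1/(k-l)}$; since $n\gamma-1\geq n-1\geq1$, the same bound holds a fortiori for $\sum_i\partial G/\partial\lambda_i$. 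The main obstacle is bookkeeping in (3): making sure that the quotient-rule cross terms $\sigma_k\sigma_{l-1}(\eta|i)$ do not spoil the sign and that the final constant genuinely takes the stated shape $\frac{2\gamma-1}{n\gamma-1}C(n,k,l)$ rather than something that degenerates as $\gamma\to\infty$ — this is where the precise normalization $\eta_i=\gamma\tr\lambda-\lambda_i$ (as opposed to $\gamma\ge1$ being used only softly) actually matters, because it is the coefficient $n\gamma-1$ in the denominator that must be matched exactly.
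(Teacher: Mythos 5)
The paper does not give its own proof of Lemma~\ref{le3}; it cites \cite{CDH23,LT94,Sp05}, so there is no in-paper argument to line up against, and your proposal must be judged on its own merits. Your overall strategy --- chain rule $\partial G/\partial\lambda_i=\gamma\sum_j\partial Q/\partial\eta_j-\partial Q/\partial\eta_i$, ordering reversal $\lambda_1\geq\cdots\geq\lambda_n\Rightarrow\eta_1\leq\cdots\leq\eta_n$, and reduction to standard monotonicity and trace facts for $\sigma_k/\sigma_l$ in the $\eta$-variables --- is exactly the route taken in those references. Parts (1), (2), and (4) are essentially right. The concavity-plus-Euler argument you sketch for (4) is clean: writing $\tilde Q=Q^{1/(k-l)}$, concavity gives $\sum_j\partial\tilde Q/\partial\eta_j\geq\tilde Q(\mathbf{1})$, which converts to $S\geq C(n,k,l)f^{1-1/(k-l)}$, and then $\sum_j\partial G/\partial\lambda_j=(n\gamma-1)S$ with $n\gamma-1\geq n-1$ finishes. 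One small slip in (2): the divided-difference identity you want is
\begin{equation*}
\frac{\partial\sigma_k}{\partial\eta_a}-\frac{\partial\sigma_k}{\partial\eta_b}=\sigma_{k-1}(\eta|a)-\sigma_{k-1}(\eta|b)=(\eta_b-\eta_a)\,\sigma_{k-2}(\eta|ab),
\end{equation*}
not $\sigma_k(\eta|a)-\sigma_k(\eta|b)$; and for the quotient the resulting cross term $\sigma_{k-2}(\eta|ab)\sigma_l-\sigma_k\sigma_{l-2}(\eta|ab)$ needs a sign check in $\Gamma_k$, which is known but not a pure triviality.

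The genuine soft spot is (3). The inequality you need is a quantitative gap $S-\partial Q/\partial\eta_1\geq c(n,k,l)\,S$, equivalently $\sum_{j\geq2}\partial Q/\partial\eta_j\geq cS$. Your appeal to Lemma~\ref{le1} is pointed at the wrong term: Lemma~\ref{le1} bounds the $k$-th largest $\sigma_{k-1}(\cdot|\cdot)$ from below, not the largest from above. What actually works is to sort $\eta$ decreasingly and note that Lemma~\ref{le1} gives $\sigma_{k-1}(\eta|\,n-k+1)\geq C(n,k)\sum_i\sigma_{k-1}(\eta|i)$; since $k<n$ forces $n-k+1\geq2$, this term sits inside $\sum_{j\geq2}$ and supplies the required gap (for $l=0$; for $l>0$ the same comparison has to be pushed through the quotient formula, which is the bookkeeping you flagged). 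Once you have $\partial G/\partial\lambda_i\geq(\gamma-1+c)S$, the passage to the stated normalization is not merely ``routine'': you should exhibit the constant explicitly, e.g.\ for $c\leq1/2$ and $\gamma\geq1$ one has $\gamma-1+c\geq c\,(2\gamma-1)$ because the ratio $(\gamma-1+c)/(2\gamma-1)$ is minimized at $\gamma=1$, hence $\partial G/\partial\lambda_i\geq c(2\gamma-1)S=\tfrac{c(2\gamma-1)}{n\gamma-1}\sum_j\partial G/\partial\lambda_j$. Your proposal gestures at this but does not verify that the minimum over $\gamma\in[1,\infty)$ lands where claimed, and that verification is precisely where the hypotheses $k<n$ and $\gamma\geq1$ enter.
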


\begin{remark}
Compared to the classical Hessian equation with the internal matrix $D^2u$, the Hessian type equation involving $\gamma \triangle u I-D^2 u$ exhibits uniform ellipticity which is indispensable for subsequent the a priori estimates.
\end{remark}

\begin{remark}\label{0919rk1}
When the matrix $\left\{\gamma \triangle u I-D^2 u\right\}$ ($\gamma \geq 1$) is replaced by $\left\{\gamma \triangle u I+D^2 u\right\}$ ($\gamma > 0$), the analogues of Lemma \ref{lem0913201} and Lemma \ref{le3} still hold for $0\leq l<k\leq n$. Furthermore, the corresponding a priori estimates can be established through similar arguments, whose details we omit here.
\end{remark}

\section{$C^0$ and $C^1$ estimates}
In this section, we first prove $C^0$ estimates for admissible solutions to problem \eqref{eq1001} under the subsolution assumption with structural conditions $f_u\geq 0$, $\varphi_u\geq \gamma_0>0$,  then proceed to derive $C^1$ estimates.

\begin{theorem}\label{C0}
Assume $\varphi$ is a smooth function and $f_u\geq 0$, $\varphi_u\geq \gamma_0>0$.
If there exists a subsolution $\underline{u}\in C^{2}(\overline{\Omega})$ satisfying problem \eqref{eq091801} and an admissible solution $u\in C^{2}(\overline{\Omega})$ satisfying problem \eqref{eq1001} in a bounded $C^2$ domain $\Omega\subset\mathbb{R}^{n}$, then
\begin{equation*}
 \max_{\overline{\Omega}} |u|\leq C,
\end{equation*}
where $C$ depends on $\gamma_0$, $|\underline{u}|_{C^0}$, $|\varphi|_{C^1}$.
\end{theorem}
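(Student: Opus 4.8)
The plan is to prove the two bounds $\max_{\overline{\Omega}}u\le C$ and $\min_{\overline{\Omega}}u\ge-C$ separately: the first from ellipticity plus the boundary condition, the second by comparison with $\underline{u}$. For the upper bound, note that admissibility of $u$ gives $\lambda(\gamma\triangle u I-D^{2}u)\in\Gamma_{k}\subset\Gamma_{1}$, so $(n\gamma-1)\triangle u=\sigma_{1}(\gamma\triangle u I-D^{2}u)>0$; since $\gamma\ge1$ and $n\ge2$ this forces $\triangle u>0$ in $\Omega$, so $u$ is strictly subharmonic and attains $M:=\max_{\overline{\Omega}}u$ at some $x_{0}\in\partial\Omega$. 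At $x_{0}$ the tangential gradient of $u$ vanishes and $Du(x_{0})\cdot\nu\le0$, so $Du(x_{0})=-a\nu$ with $a\ge0$, and strict obliqueness gives $\varphi(x_{0},M)=u_{\beta}(x_{0})=-a\langle\beta,\nu\rangle\le0$. Integrating $\varphi_{u}\ge\gamma_{0}$ in the last variable: if $M>0$ then $0\ge\varphi(x_{0},M)\ge\varphi(x_{0},0)+\gamma_{0}M\ge-|\varphi|_{C^{0}}+\gamma_{0}M$, so $M\le|\varphi|_{C^{0}}/\gamma_{0}$; the case $M\le0$ is trivially bounded. Hence $\max_{\overline{\Omega}}u\le|\varphi|_{C^{0}}/\gamma_{0}$.

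For the lower bound I would show $u\ge\underline{u}$ on $\overline{\Omega}$, which gives $u\ge-|\underline{u}|_{C^{0}}$. Put $w=u-\underline{u}$ and suppose, for contradiction, $m:=\min_{\overline{\Omega}}w<0$, attained at $x_{1}$. If $x_{1}\in\partial\Omega$, then $Dw(x_{1})=b\nu$ with $b\ge0$, so $w_{\beta}(x_{1})=b\langle\beta,\nu\rangle\ge0$; but $w_{\beta}(x_{1})=\varphi(x_{1},u(x_{1}))-\varphi(x_{1},\underline{u}(x_{1}))\le\gamma_{0}m<0$ by $\varphi_{u}\ge\gamma_{0}$, a contradiction. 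If $x_{1}\in\Omega$, linearize along $u_{t}=tu+(1-t)\underline{u}$: with $F(D^{2}v)=\tfrac{\sigma_{k}}{\sigma_{l}}(\gamma\triangle v I-D^{2}v)$, integrating the $t$-derivative gives
\[
f(x,u)-\tfrac{\sigma_{k}}{\sigma_{l}}(\gamma\triangle \underline{u} I-D^{2}\underline{u})=F(D^{2}u)-F(D^{2}\underline{u})=a^{ij}w_{ij},\qquad a^{ij}:=\int_{0}^{1}F^{ij}(D^{2}u_{t})\,dt,
\]
and $a^{ij}$ is positive definite at each interior point, because $\lambda(\gamma\triangle u I-D^{2}u)\in\Gamma_{k}$ (an \emph{open} cone) makes $F^{ij}(D^{2}u_{t})$ positive definite for $t$ near $1$ — this is the uniform ellipticity of $F$ on the admissible cone noted after Lemma~\ref{le3}. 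Using the subsolution inequality and $f_{u}\ge0$,
\[
a^{ij}w_{ij}\le f(x,u)-f(x,\underline{u})=b(x)\,w,\qquad b(x):=\int_{0}^{1}f_{u}(x,u_{t})\,dt\ge0,
\]
so $w$ is a supersolution of the elliptic operator $a^{ij}\partial_{ij}-b$ with $b\ge0$ attaining a nonpositive interior minimum; the strong maximum principle forces $w\equiv m$ on $\Omega$, whence $Dw\equiv0$ on $\overline{\Omega}$ and $w_{\beta}\equiv0$ on $\partial\Omega$, contradicting $w_{\beta}\le\gamma_{0}m<0$. Therefore $m\ge0$, i.e.\ $u\ge\underline{u}$. (As is standard, $\underline{u}$ is taken admissible, so the interpolants keep $\lambda(\gamma\triangle u_{t} I-D^{2}u_{t})\in\overline{\Gamma}_{k}$.)

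Combining the two bounds gives $\max_{\overline{\Omega}}|u|\le\max\{|\varphi|_{C^{0}}/\gamma_{0},\,|\underline{u}|_{C^{0}}\}$, which is of the asserted form. I expect the interior case of the lower bound to be the one genuine obstacle: since the equation is degenerate ($f$ may vanish), a naive comparison fails, and one must extract a genuinely elliptic linear operator — its ellipticity coming from $\lambda(\gamma\triangle u I-D^{2}u)$ lying in the \emph{open} cone $\Gamma_{k}$ and from $\gamma\ge1$, not from positivity of $f$ — and then combine the strong maximum principle with the oblique boundary condition and $\varphi_{u}\ge\gamma_{0}>0$. Alternatively, replacing $w$ by $w+\varepsilon\psi$ with $\psi>0$ strictly concave turns the relevant inequalities strict and lets one conclude by the first-derivative test alone, letting $\varepsilon\to0$.
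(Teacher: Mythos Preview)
Your proof is correct and takes essentially the same approach as the paper: the upper bound via the fact that $u$ has no interior maximum (you use $\triangle u>0$ from $\Gamma_{k}\subset\Gamma_{1}$; the paper uses $\sum F^{ij}u_{ij}=(k-l)f\ge0$) combined with the oblique boundary condition and $\varphi_{u}\ge\gamma_{0}$, and the lower bound via the comparison $u\ge\underline{u}$. The paper's treatment of the lower bound is terser---it simply asserts that ``the maximum principle yields that $\underline{u}-u$ achieves its maximum at a boundary point'' without writing out the linearization $a^{ij}w_{ij}\le b\,w$ or handling the constant case from the strong maximum principle---so your version is a more detailed justification of the same argument.
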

\begin{proof}
From the $(k-l)$-degree homogeneity of the operator $F$, we derive $\sum_{i,j} F^{ij}u_{ij} \geq 0$. Consequently, the maximum of $u$ is attained at a boundary point $x_0 \in \partial\Omega$ with $u(x_0) > 0$, otherwise, the upper bound would be obtained. This implies
\begin{align*}
0\geq D_\beta u(x_0)
&=\varphi(x_0, u(x_0))-\varphi(x_0, 0)+\varphi(x_0, 0)\\
&=\varphi_u(x_0,tu(x_0))u(x_0)+\varphi(x_0,0)\\
&\geq\gamma_0u(x_0)+\varphi(x_0,0),
\end{align*}
where $0<t<1$. Then $u(x_0)\leq \frac{-\varphi(x_0,0)}{\gamma_0}\leq C$.

Under the condition $f_u\geq 0$, the maximum principle yields that $\underline{u}-u$ achieves its maximum at a boundary point $x_1\in \partial\Omega$,
thus
\begin{align*}
0\geq D_\beta(\underline{u}-u)(x_1)&=\varphi(x_1,\underline{u}(x_1))-\varphi(x_1,u(x_1))\\
&=\varphi_u[x_1,tu(x_1)+(1-t)\underline{u}(x_1)](\underline{u}-u)(x_1),
\end{align*}
where $0<t<1$. Since $\varphi_u>0$, for any $x\in\overline{\Omega}$
$$(\underline{u}-u)(x)\leq(\underline{u}-u)(x_1)\leq 0.$$
Hence we finish the proof.
\end{proof}

\begin{theorem}\label{C1}
Let $u\in C^{3}(\overline{\Omega})$ be an admissible solution of problem (\ref{eq1001}) in a bounded $C^3$ domain $\Omega\subset\mathbb{R}^{n}$. Assume that $0\leq f^{\frac{1}{k-l}}\in C^{1,1}(\overline{\Omega}\times\mathbb{R})$, the oblique boundary value condition $u_{\beta}=\varphi(x,u)$ satisfies $\left<\beta,\nu\right>\geq \beta_{0}>0$, where $\beta$ is a smooth unit vector field along $\partial\Omega$, $\nu$ is the unit inner normal vector field along $\partial\Omega$, $\varphi$ is a smooth function defined on $\overline{\Omega}\times\mathbb{R}$. Then for $\gamma\geq 1$, $0\leq l< k< n$
\begin{equation}\label{eq3002}
  \sup_{\overline{\Omega}}|Du|\leq C,
\end{equation}
where $C$ is a positive constant depending only on $n,k,l,\gamma,\overline{\Omega},\beta_{0},|\beta|_{C^{3}},|f^{\frac{1}{k-l}}|_{C^{1}},|\varphi|_{C^{3}}$.
\end{theorem}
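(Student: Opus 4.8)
The plan is to establish the gradient estimate in two stages: first an interior gradient bound, and then a boundary gradient bound, combined via the usual maximum-principle reduction. For the interior estimate I would consider the auxiliary function $w = \log|Du|^2 + \phi(u)$ (or $w = |Du|^2 e^{\phi(u)}$) for a suitably chosen increasing function $\phi$, and examine its maximum over $\overline{\Omega}$. If the maximum is attained at an interior point, I differentiate $w$, use the linearized equation $\widetilde F^{ij} u_{ij\xi} = \widetilde f_{x_\xi} + \widetilde f_z u_\xi$ from \eqref{eq0913202}, together with the concavity in Lemma \ref{lem0913201} and the ellipticity/structure bounds in Lemma \ref{le3} (in particular items (3) and (4), which give both the uniform lower bound on $\sum_j \partial[\sigma_k/\sigma_l]/\partial\lambda_j$ relative to any single entry and the bound $\sum_j \widetilde F^{jj} \geq C f^{1-1/(k-l)}$). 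The key point enabling this is the uniform ellipticity noted in the remark after Lemma \ref{le3}: because the internal matrix is $\gamma\triangle u I - D^2 u$ rather than $D^2u$, the operator $\widetilde F$ is uniformly elliptic with a two-sided bound on the eigenvalues of $(\widetilde F^{ij})$ once $f^{1/(k-l)}$ is bounded, so the standard Bernstein argument closes after choosing $\phi$ with $\phi'$ large enough to absorb the negative terms coming from $\widetilde f_z u_\xi$ and the second-derivative terms.

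For the boundary estimate I would use the obliqueness $\langle\beta,\nu\rangle \geq \beta_0 > 0$ to split the gradient at a boundary point into its tangential and oblique (or normal) components. The tangential derivatives are controlled by differentiating the boundary condition $u_\beta = \varphi(x,u)$ along directions tangent to $\partial\Omega$, which expresses $D_\tau(u_\beta)$ in terms of $u$, $Du$ and the $C^3$ data $\beta, \varphi, \partial\Omega$; this is where $|\beta|_{C^3}$ and $|\varphi|_{C^3}$ enter. For the oblique component I would construct a barrier: take $v = \underline{u} + t d - N d^2$ (or a similar combination using the subsolution, the distance function $d(x) = \mathrm{dist}(x,\partial\Omega)$, and large constants $t, N$) on a boundary neighborhood $\Omega_\delta = \{d < \delta\}$, and show that $\pm(u - \underline{u})$ lies below a function of the form $C d$ near $\partial\Omega$ by comparing $\widetilde F$ of the barrier against $\widetilde f$, using the subsolution inequality in \eqref{eq091801} and again the uniform ellipticity from Lemma \ref{le3}(3)–(4). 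This yields $|D_\beta(u - \underline{u})|$ at the boundary, hence a bound on $u_\beta$, and combining with the tangential bound gives $|Du| \leq C$ on $\partial\Omega$.

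Finally I would combine the two: the maximum of $w$ over $\overline{\Omega}$ is either interior (handled by Bernstein) or on $\partial\Omega$ (handled by the barrier plus tangential estimates), and in either case $w \leq C$, which after unwinding $\phi$ and using the already-established $C^0$ bound (Theorem \ref{C0}, or just $|u| \leq C$ as hypothesized) gives $\sup_{\overline{\Omega}}|Du| \leq C$ with the stated dependence. I would be careful to track that no term in the Bernstein computation requires a bound on $D^2 u$ — one only uses that $\widetilde F^{ij} u_{ij} $-type quantities are controlled and that the second-order terms $\widetilde F^{ij,rs} u_{ij\xi}u_{rs\xi}$ have the favorable sign from concavity, so the argument is genuinely a gradient-only estimate.

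The main obstacle I anticipate is handling the degeneracy $f \geq 0$ cleanly in both the interior Bernstein step and the boundary barrier. When $f$ vanishes, the bound $\sum_j \widetilde F^{jj} \geq C f^{1-1/(k-l)}$ from Lemma \ref{le3}(4) degenerates, so one cannot simply divide by it; instead the argument must be organized so that one only ever needs the $f$-independent ellipticity bound from Lemma \ref{le3}(3) (the proportion of the trace captured by each eigenvalue), and the terms that would otherwise require Lemma \ref{le3}(4) must be shown to have the right sign or be absorbed using the hypothesis $f^{1/(k-l)} \in C^{1,1}$ — which crucially bounds $|D(f^{1/(k-l)})|$ and hence controls $\widetilde f_{x_\xi}$, $\widetilde f_z$ without any factor of a negative power of $f$. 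Getting the constant $\phi'$ and the barrier constants $t, N$ to depend only on the listed quantities, and not implicitly on $\inf f$ or on $\sup|D^2u|$, is the delicate bookkeeping that makes this a genuine degenerate-case estimate rather than a routine adaptation of the nondegenerate one.
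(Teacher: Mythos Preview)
Your interior plan is fine and, in fact, the paper simply cites the interior gradient estimate from Dong--Wei \cite{DW22}. The boundary argument, however, has a genuine gap.

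You propose to bound the oblique component $u_\beta$ at $\partial\Omega$ via a barrier built from the subsolution $\underline u$. But on $\partial\Omega$ the boundary condition $u_\beta=\varphi(x,u)$ already gives $|u_\beta|\le C$ directly from the $C^0$ bound; no barrier is needed. (Note also that Theorem~\ref{C1} as stated does not assume a subsolution---that hypothesis appears only in Theorem~\ref{C0}.) The nontrivial part of the boundary gradient estimate is the \emph{tangential} component, and here your plan fails: differentiating $u_\beta=\varphi(x,u)$ in a tangential direction $\tau$ yields $D_\tau(u_\beta)=\varphi_{x_\tau}+\varphi_u u_\tau$, a relation involving \emph{second} derivatives of $u$. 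This is exactly what is used for the mixed second-derivative bound in Lemma~\ref{lemqx}, but it does not bound $u_\tau$ itself. So the core difficulty---controlling $|D'u|$ on $\partial\Omega$---is not addressed.

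The paper's approach is quite different. One works in a strip $\Omega_\mu=\{d\le\mu\}$ with the modified function $w=u-\dfrac{\varphi(x,u)\,d}{\cos\theta}$, chosen so that on $\partial\Omega$ one has $w_\nu=-\sum_q w_q\beta_q/\beta_\nu$ and hence $w_\nu^2\le\sin^2\theta\,|Dw|^2$. The auxiliary function is
\[
\widetilde G(x)=\log\psi(x)+g(u)+\alpha_1 d(x),\qquad \psi=\sum_{i,j}(\delta_{ij}-d_id_j)w_iw_j,
\]
so $\psi$ is the squared \emph{tangential} gradient of $w$, and $g(u)=\tfrac14\log\tfrac{1}{3M-u}$. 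If the maximum of $\widetilde G$ on $\overline{\Omega_\mu}$ lies on $\partial\Omega$, the boundary relation for $w_\nu$ together with a large choice of $\alpha_1$ yields a contradiction. If it lies in the interior of $\Omega_\mu$, one applies $F^{ij}\partial_{ij}$ and runs a Bernstein-type computation; the degeneracy is absorbed exactly as you anticipate, using $f^{1/(k-l)}\in C^{1,1}$ and Lemma~\ref{le3} to control the $\partial_k f$ terms without any lower bound on $f$. The mechanism that replaces your barrier is thus the $\alpha_1 d$ term combined with the special structure of $w$, not a subsolution comparison.
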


\begin{proof}
Building upon the interior gradient estimates established by Dong-Wei \cite{DW22}, we focus our analysis on deriving gradient estimates near the boundary. Denote $\Omega_{\mu}=\left\{x\in\Omega\;|\; d(x)=dist(x,\partial\Omega)\leq\mu\right\}$, here $\mu$ is a small positive constants to be determined later. Now extend $\beta$ to $\Omega_{\mu}$ smoothly such that $\left<\beta,Dd\right>=\cos\theta\geq \beta_{0}>0$.

Taking a unit normal moving frame along $\partial\Omega$ denoted by $\{e_{1},e_{2},\cdots,e_{n-1},\nu\}$, the oblique vector field decomposes as
\begin{equation}\label{eq3003}
  \beta=\beta_{\nu}\cdot\nu+\sum_{q=1}^{n-1}\beta_{q}e_{q},
\end{equation}
where $\beta_{\nu}=\left<\beta,\nu\right>=\cos\theta\geq \beta_{0}>0$, thus $\varphi(x,u)=u_{\nu}\beta_{\nu}+\sum_{q=1}^{n-1}u_{q}\beta_{q}$ on $\partial\Omega$. Defining $w(x)=u(x)-\frac{\varphi(x,u)d(x)}{\cos\theta}$, on $\partial\Omega$ we get
\begin{align}
  &w_{\nu}=-\sum_{q=1}^{n-1}w_{q}\frac{\beta_{q}}{\beta_{\nu}}, \label{eq3006} \\
  &w_{\nu}^{2}\leq\sin^{2}\theta|Dw|^{2}. \label{eq3007}
\end{align}
Set an auxiliary function
\begin{equation}\label{eq3008}
  \widetilde{G}(x)=\log\psi(x)+g(u)+\alpha_{1} d(x),
\end{equation}
where $\alpha_{1}$ is a sufficiently large positive constant and
\begin{equation}\label{eq3009}
  \psi(x)=|Dw|^{2}-\left(\sum_{i=1}^{n}w_{i}d_{i}\right)^{2}=\sum_{i,j=1}^{n}\left(\delta_{ij}-d_{i}d_{j}\right)w_{i}w_{j}
  =\sum_{i,j=1}^{n}C^{ij}w_{i}w_{j},
\end{equation}
\begin{equation}\label{eq3010}
  g(u)=\frac{1}{4}\ln\frac{1}{(3M-u)},\quad M=\max_{\overline{\Omega}} u,
\end{equation}
then we derive
$$\frac{1}{16M}\leq g' \leq \frac{1}{8M},\quad g''-2(g')^2 \geq \frac{1}{128M^2}.$$
Suppose $\widetilde{G}(x)$ achieves its maximum value at $x_{0}\in\overline{\Omega}_{\mu}$. In view of the interior gradient estimates, the analysis reduces to the following two cases.

$\mathbf{Case\;\mathrm{\mathbf{I}}}$: $x_{0}\in\partial\Omega$.
We choose a coordinate around $x_{0}$ such that $\nu=\frac{\partial}{\partial x_{n}}$ and $\frac{\partial}{\partial x_{s}}$ are tangent to $\partial\Omega$, then
\begin{equation*}
  \frac{\partial d}{\partial x_{n}}=1,\,\frac{\partial d}{\partial x_{s}}=0,\,\frac{\partial^{2} d}{\partial x_{n}\partial x_{i}}=0\,(1\leq i\leq n), \,\frac{\partial^{2} d}{\partial x_{s}\partial x_{q}}=-\kappa_{s}\delta_{sq}\,(1\leq s,q\leq n-1),
\end{equation*}
here $\kappa_{s}$ are principal curvatures of $\partial\Omega$ at $x_{0}$.
By virtue of \eqref{eq3007}
\begin{equation*}
  \psi=|Dw|^{2}-(w_nd_{n})^{2}\geq \cos^2\theta |Dw|^{2}\geq \beta_0^2|Dw|^{2}>0.
\end{equation*}
A direct calculation yields
\begin{equation}\label{305}
  w_{\nu s}=w_{s\nu}+\sum_{i=1}^{n}\mathcal{K}_{is}w_{i},
\end{equation}
where $\mathcal{K}_{is}$ denoted by the Weingarten matrix of the boundary with respect to $\nu$.
We firstdeal with $\psi_{\nu}$ for later computations
\begin{equation}\label{307}
  \psi_{\nu}(x_{0})=\psi_{n}
  =-2\sum_{s,q=1}^{n-1}w_{s}w_{sq}\frac{\beta_{q}}{\beta_{\nu}}-2\sum_{s,q=1}^{n-1}w_{s}w_{q}\left(\frac{\beta_{q}}{\beta_{\nu}}\right)_{s}
  -2\sum_{s=1}^{n-1}\sum_{i=1}^n\mathcal{K}_{si}w_{s}w_{i}.
\end{equation}
At $x_{0}$
\begin{align}
0&=\widetilde{G}_{s}=\frac{\psi_{s}}{\psi}+g'u_{s}+\alpha_{1}d_{s}=\frac{\psi_{s}}{\psi}+g'u_{s},\label{308} \\
0&\geq\widetilde{G}_{n}=\frac{\psi_{n}}{\psi}+g'u_{n}+\alpha_{1}. \label{309}
\end{align}
By \eqref{307}-\eqref{309}
\begin{equation}\label{312}
 \begin{aligned}
  0\geq&\;-\frac{2w_{n}\sum_{s=1}^{n-1}\sum_{i=1}^nw_{i}\beta_{s}d_{is}}{\beta_{n}\psi}-
  \frac{2\sum_{s,q=1}^{n-1}w_{s}w_{q}\left(\frac{\beta_{q}}{\beta_{\nu}}\right)_{s}}{\psi}\\
  &\;-\frac{2\sum_{s=1}^{n-1}\sum_{i=1}^n\mathcal{K}_{si}w_{s}w_{i}}{\psi}+\frac{g'\varphi}{\cos\theta}+\alpha_{1}\\
   =&\,-\frac{2\sum_{s,q=1}^{n-1}w_{s}w_{q}\left(\frac{\beta_{q}}{\beta_{\nu}}\right)_{s}}{\psi}
      -\frac{2\sum_{s=1}^{n-1}\sum_{i=1}^n\mathcal{K}_{si}w_{s}w_{i}}{\psi}\\
  &\,+\frac{2\sum_{s=1}^{n-1}w_{s}\beta_{s}^2\kappa_s^2}{\beta_{n}^2\psi}+\frac{g'\varphi}{\cos\theta}+\alpha_{1}\\
 \geq&\;-C+\alpha_{1},
\end{aligned}
\end{equation}
we immediately arrive at a contradiction from taking $\alpha_{1}$ large enough.

$\mathbf{Case\;\mathrm{\mathbf{II}}}$: $x_{0}\in \Omega_{\mu}$. Recalling $w=u-\frac{\varphi d}{\cos\theta}$, direct calculation yields
\begin{equation}\label{eq07091}
\begin{aligned}
w_k=&\left(1-\frac{\varphi_zd}{\cos\theta}\right)u_k+O(1),\\
w_{ki}=&\left(1-\frac{\varphi_zd}{\cos\theta}\right)u_{ki}+O(d|Dw|^2+|Dw|+1),\\
w_{lij}=&\left(1-\frac{\varphi_zd}{\cos\theta}\right)u_{lij}+O(d|Dw|+1)|u_{ii}|+O(d|Dw|^3+|Dw|^2),
\end{aligned}
\end{equation}
Assume that $|Du|$ attains its boundary maximum at $y_{0}\in\partial\Omega$ and
\begin{equation}\label{314}
  |Du|^{2}(y_{0})\geq 4\sup_{\partial\Omega}\left|\frac{\varphi}{\cos\theta}\right|^{2}.
\end{equation}
As $\widetilde{G}(x_{0})\geq\widetilde{G}(y_{0})$
\begin{equation}\label{315}
\begin{aligned}
\psi(x_{0})\geq\frac{\beta_{0}^{2}C(\mu,\alpha_{1})\text{e}^{-\frac{1}{4}}}{4}|Du|^{2}(y_{0}),
\end{aligned}
\end{equation}
this follows that
\begin{equation}\label{316}
\psi(x_{0})
\geq\frac{\beta_{0}^{2}C(\mu,\alpha_{1})}{8C}|Du|^{2}(x_{0})
\geq\frac{\beta_{0}^{2}C(\mu,\alpha_{1})}{16C}|Dw|^{2}(x_{0})= C_{0}|Dw|^{2}(x_{0})>0,
\end{equation}
where $C_{0}\in(0,1)$. We choose coordinates such that the matrix $D^{2}u(x_{0})$ is diagonal. It is straightforward to observe that
 $$\left|\overrightarrow{T}\right|\leq|Dw|, \quad \psi\leq\left|\overrightarrow{T}\right||Dw|,$$
where $T_{k}=\sum_{l=1}^{n}C^{kl}w_{l}$ and $\overrightarrow{T}=(T_{1},T_{2},\cdots,T_{n})$.
Then
\begin{equation}\label{318}
  C_{0}|Dw|\leq\left|\overrightarrow{T}\right|\leq|Dw|.
\end{equation}
Without loss of generality, we may assume the inequality $ T_{1}w_{1}\geq \frac{C_{0}}{n}|Dw|^{2}>0$ holds, which immediately yields $\frac{w_{1}}{T_1}\geq\frac{C_{0}}{n}$.
Now choose $\mu$ sufficiently small to guarantee that
\begin{equation}\label{321}
  \frac{u_{1}}{T_1}
  \geq\frac{C_{0}}{3n}.
\end{equation}
At $x_{0}$
\begin{equation}\label{323}
  0=\widetilde{G}_{i}=\frac{\psi_{i}}{\psi}+g'u_{i}+\alpha_{1}d_{i},
\end{equation}
denote ${C^{kl} }_{,i}=\frac{\partial C^{kl}}{\partial x_{i}}$ and ${C^{kl} }_{,ij}=\frac{\partial ^{2}C^{kl}}{\partial x_{i}\partial x_{j}}$, for $i=1$ in equation (\ref{323}), we obtain
\begin{equation}\label{326}
  \sum_{l=1}^{n}T_{l}w_{l1}=-\frac{\psi}{2}(g'u_{1}+\alpha_{1}d_{1})-\sum_{k,l=1}^{n}\frac{{C^{kl} }_{,1}}{2}w_{k}w_{l},
\end{equation}
combine \eqref{eq07091}, \eqref{318} and \eqref{326}
\begin{align*}
\left(1-\frac{\varphi_zd}{\cos\theta}\right)u_{11}=O(d|Du|^2+|Du|)\frac{\sum_k T_k}{T_1}-\frac{\psi}{2}\left(g'\frac{u_1}{T_1}+\frac{B_0d_1}{T_1}\right)-\sum_{k,l}\frac{{C^{kl,} }_{1}}{2}\frac{w_{k}w_{l}}{T_1},
\end{align*}
it implies that $|u_{11}|\leq C|Du|^2$,
and we can get $u_{11}<0$ if $\mu$ is small enough.
By using the Cauchy-schwarz inequality at $x_0$
\begin{eqnarray}\label{329}
\begin{aligned}
0\geq F^{ij}\widetilde{G}_{ij}=&\;\frac{F^{ij}\left(\sum_{k,l=1}^{n}C^{kl}w_{k}w_{l}\right)_{ij}}{\psi}-2\alpha_{1}g'F^{ij}u_{i}d_{j}-\alpha_{1}^{2}F^{ij}d_{i}d_{j}\\
&\;+\left[g''-\left(g'\right)^{2}\right]F^{ij}u_{i}u_{j}+g'F^{ij}u_{ij}+\alpha_{1}F^{ij}d_{ij}\\
\geq&\;\frac{F^{ij}\left(\sum_{k,l=1}^{n}C^{kl}w_{k}w_{l}\right)_{ij}}{\psi}-2\alpha_{1}^{2}F^{ij}d_{i}d_{j}
+\alpha_{1}F^{ij}d_{ij}\\
&\;+\left[g''-2\left(g'\right)^{2}\right]F^{ij}u_{i}u_{j}+g'(k-l)f\\
=&\;\text{I}+\text{II}+\text{III}+\text{IV}+\text{V}.
\end{aligned}
\end{eqnarray}
According to (\ref{318}) and (\ref{321}), we obtain
\begin{equation}\label{330}
\begin{aligned}
\text{II}=&\;-2\alpha_{1}^{2}F^{ij}d_{i}d_{j}\geq-2\alpha_{1}^{2}\sum_{i=1}F^{ii},\\
\text{III}=&\;\alpha_{1}F^{ij}d_{ij}\geq-\alpha_{1}K_{0}\sum_{i=1}^{n}F^{ii},\\
\text{IV}=&\;\left[g''-2\left(g'\right)^{2}\right]F^{ij}u_{i}u_{j}\geq C_{2}\left[g''-2\left(g'\right)^{2}\right]|Dw|^{2}\sum_{i=1}^{n}F^{ii},\\
\text{V}=&\;g'(k-l)f\geq0,
\end{aligned}
\end{equation}
where $K_{0}$ is a positive constant related to the geometry of $\partial\Omega$. It remains to address the complex term \text{I}.
\begin{equation}\label{331}
\begin{aligned}
\text{I}=&\;\frac{F^{ij}\left(\sum_{k,l=1}^{n}C^{kl}w_{k}w_{l}\right)_{ij}}{\psi}\\
=&\;\frac{\sum_{i,j,k,l=1}^{n}F^{ij}{C^{kl}}_{,ij}w_{k}w_{l}}{\psi}
+\frac{2\sum_{i,j,k,l=1}^{n}F^{ij}C^{kl}w_{ijk}w_{l}}{\psi}\\
&\;\frac{4\sum_{i,j,k,l=1}^{n}F^{ij}{C^{kl}}_{,j}w_{ik}w_{l}}{\psi}+\frac{2\sum_{i,j,k,l=1}^{n}F^{ij}C^{kl}w_{ik}w_{jl}}{\psi}\\
=&\;I_{1}+I_{2}+I_{3}+I_{4}.
\end{aligned}
\end{equation}
For the term $I_{1}$, it is easy to know that
\begin{equation}\label{332}
  \psi I_{1}=\sum_{i,j,k,l=1}^{n}F^{ij}{C^{kl}}_{,ij}w_{k}w_{l}\geq-C|Dw|^{2}\sum_{i=1}^{n}F^{ii}.
\end{equation}
For the term $I_2$, by \eqref{eq0904203}, \eqref{eq07091}, \eqref{318}, Lemma \ref{le3} and the condition  $f^{\frac{1}{k-l}}\in C^{1,1}(\overline{\Omega}\times\mathbb{R})$ which is employed to address the lack of $\inf f$, we derive
\begin{equation}\label{333}
\begin{aligned}
\psi I_2
=&\;2\sum_{k=1}^{n}T_{k}\left[\partial_{k}f-\sum_{i,j=1}^{n}F^{ij}\left(\frac{\varphi d}{\cos\theta}\right)_{ijk}\right]\\
\geq&\;-Cd|Dw|^{4}\sum_{i=1}^{n}F^{ii}-C|Dw|^{3}\sum_{i=1}^{n}F^{ii}\\
&\;-Cd|Dw|^{2}\sum_{i=1}^{n}\left|F^{ii}u_{ii}\right|-C|Dw|\sum_{i=1}^{n}\left|F^{ii}u_{ii}\right|,
\end{aligned}
\end{equation}
similarly
\begin{equation}\label{334}
\psi I_3=4\sum_{i,j,k,l=1}^{n}F^{ij}{C^{kl}}_{,j}w_{ik}w_{l}
\geq-C|Dw|^{3}\sum_{i=1}^{n}F^{ii}-C|Dw|\sum_{i=1}^{n}\left|F^{ii}u_{ii}\right|,
\end{equation}
\begin{align}\label{335}
\psi I_4=2\sum_{i,j,k,l=1}^{n}F^{ij}C^{kl}w_{ik}w_{jl}
\geq&\;2\sum_{i=1}^{n}F^{ii}C^{ii}u_{ii}^{2}-Cd|Dw|^{2}\sum_{i=1}^{n}\left|F^{ii}u_{ii}\right|\\
\nonumber&\;-Cd|Dw|^{4}\sum_{i=1}^{n}F^{ii}-C|Dw|^{3}\sum_{i=1}^{n}F^{ii}.
\end{align}
Substituting \eqref{332}-(\ref{335}) into \eqref{331}, we obtain
\begin{equation}\label{336}
\begin{aligned}
\psi I
\geq&\;2\sum_{i=1}^{n}F^{ii}C^{ii}u_{ii}^{2}-Cd|Dw|^{2}\sum_{i=1}^{n}\left|F^{ii}u_{ii}\right|-C|Dw|\sum_{i=1}^{n}\left|F^{ii}u_{ii}\right|\\
&\;-Cd|Dw|^{4}\sum_{i=1}^{n}F^{ii}-C|Dw|^{3}\sum_{i=1}^{n}F^{ii}.
\end{aligned}
\end{equation}
Let
\begin{equation}\label{337}
  \mathcal{H}=2\sum_{i=1}^{n}F^{ii}C^{ii}u_{ii}^{2}-Cd|Dw|^{2}\sum_{i=1}^{n}\left|F^{ii}u_{ii}\right|-C|Dw|\sum_{i=1}^{n}|F^{ii}u_{ii}|,
\end{equation}
without loss of generality, we assume that  $C^{11}$ be the smallest of $\left\{C^{ii}\right\}_{i=1}^{n}$. This assumption implies that $C^{ii} \geq \frac{1}{2}$ for all $i \geq 2$, otherwise it follows that $\sum_{i=1}^{n}C^{ii}<n-1$, which contradicts with $\sum_{i=1}^{n}C^{ii}=n-1$. A direct computation shows that
\begin{equation}\label{339}
\mathcal{H}
\geq-C\sum_{i=1}^{n}F^{ii}\left(d|Dw|^{2}+|Dw|\right)^{2},
\end{equation}
thus
\begin{equation}\label{340}
  I
  \geq-Cd|Dw|^{2}\sum_{i=1}^{n}F^{ii}-C|Dw|\sum_{i=1}^{n}F^{ii}.
\end{equation}
Inserting (\ref{330}) and (\ref{340}) into (\ref{329}) yields
\begin{align}\label{341}
0\geq&\left\{C_{2}\left[g''-2\left(g'\right)^{2}\right]|Dw|^{2}-C_{3}\mu|Dw|^{2}-C|Dw|\right\}\sum_{i=1}^{n}F^{ii}\\
\nonumber\geq& \left(\frac{C_2}{128M^2}|Dw|^{2}-C_{3}\mu|Dw|^{2}-C|Dw|\right)\sum_{i=1}^{n}F^{ii}\\
\nonumber\geq&\left(\frac{C_2}{256M^2}|Dw|^{2}-C|Dw|\right)\sum_{i=1}^{n}F^{ii},
\end{align}
where the last inequality follows by taking $\mu$ sufficiently small. This completes the proof.
\end{proof}

\section{Second order estimates}
\subsection{Global second order estimates}
In this subsection, we establish global second order estimates for admissible solutions of problem \eqref{eq07081}. For the degenerate Hessian quotient type equations, the regularity condition $f^{\frac{1}{k-l}}\in C^{1,1}(\overline{\Omega}\times\mathbb{R})$ is imposed to ensure that the estimate remain independent of $\inf_{\overline{\Omega}} f$.
\begin{lemma}\label{lm0904401}
Let $u\in C^{4}(\Omega)$ be an admissible solution of problem \eqref{eq07081}  in a bounded $C^{4}$ domain $\Omega\subset\mathbb{R}^{n}$. Suppose $0\leq f^{\frac{1}{k-l}}\in C^{1,1}(\overline{\Omega}\times\mathbb{R})$, then there exists a positive constant $C$ depending on $\overline{\Omega}$, $n$, $k$, $l$, $\gamma$, $|u|_{C^{1}}$, $|f^{\frac{1}{k-l}}|_{C^{1,1}}$ s.t.
\begin{equation}\label{eq2002}
  \sup_{\Omega}|D^{2}u|\leq \sup_{\partial\Omega}|D^{2}u|+C.
\end{equation}
\end{lemma}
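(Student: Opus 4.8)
The plan is to follow the classical maximum-principle approach to interior (global) second-order estimates for Hessian-type equations, adapted to the degenerate quotient operator $\widetilde F = (\sigma_k/\sigma_l)^{1/(k-l)}$. I would consider the test function
\begin{equation*}
  W(x,\xi) = \log w_{\xi\xi} + \Phi(|Du|^2) + \Psi(u),
\end{equation*}
where $\xi$ is a unit vector, $w_{\xi\xi}$ denotes the second derivative of the auxiliary quantity $w = u$ (or rather the components $U_{\xi\xi}$ of $U = \gamma\Delta u I - D^2 u$, since ellipticity of $\widetilde F$ is with respect to $U$), and $\Phi$, $\Psi$ are suitable auxiliary functions with $\Phi' > 0$ large and $\Psi$ concave. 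Because the largest eigenvalue of $D^2 u$ and the largest eigenvalue of $U$ control each other up to $|Du|$-dependent constants (thanks to $\gamma\geq 1$ and the uniform ellipticity noted in the Remark after Lemma \ref{le3}), it suffices to bound $\max_\xi U_{\xi\xi}$. If the maximum of $W$ over $\overline\Omega\times\uS$ is attained on $\partial\Omega$, then \eqref{eq2002} follows immediately; so I assume it is attained at an interior point $x_0$ with direction $\xi = e_1$, and I may choose coordinates so that $D^2u(x_0)$ (hence $U(x_0)$) is diagonal with $U_{11}$ the largest entry.

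The key steps, in order: (1) compute the first-order condition $W_i = 0$ at $x_0$, which gives $u_{11i}/U_{11}$-type identities expressing third derivatives in terms of lower-order data; (2) compute $0 \geq F^{ij} W_{ij}$ at $x_0$ and expand, using $F^{ij}(U_{11})_{ij} = F^{ij}\partial_{11}U_{ij} + (\text{commutator terms})$ — here the commutators vanish since we are in flat $\mathbb{R}^n$, so differentiating the equation \eqref{eq0904201} twice in the direction $e_1$ via \eqref{eq0904204} (or better \eqref{eq091902} for $\widetilde F$) produces $F^{ij}(U_{11})_{ij} = \partial_{11}f + |F^{ij,rs}u_{ij1}u_{rs1}|$-type terms; (3) invoke the concavity of $\widetilde F$ (Lemma \ref{lem0913201}) to absorb the second-derivative term $\widetilde F^{ij,rs}u_{ij1}u_{rs1}$ — this is the crucial point: working with $\widetilde f = f^{1/(k-l)}$ rather than $f$ is exactly what makes the right-hand side $\partial_{11}\widetilde f$ bounded by $|f^{1/(k-l)}|_{C^{1,1}}$ \emph{without} any lower bound on $f$, since $\widetilde f_{zz}u_1^2 + 2\widetilde f_{x_1 z}u_1 + \widetilde f_{x_1 x_1}$ is controlled by $|u|_{C^1}$; (4) handle the good positive term coming from $\Phi'\,F^{ij}(|Du|^2)_{ij} \geq 2\Phi' F^{ij}u_{ki}u_{kj} - C\Phi'\sum F^{ii}$ together with $\Psi'' F^{ij}u_iu_j$, using Lemma \ref{le3}(3)(4) to know $\sum_i F^{ii} \gtrsim f^{1-1/(k-l)} > 0$ on the admissible cone and that no single $F^{ii}$ degenerates relative to the sum; (5) combine: the third-derivative terms left over after concavity are dominated by $\Phi'\sum F^{ii}U_{11}^2$-type good terms (a standard argument splitting indices into those with $U_{ii}$ comparable to $U_{11}$ and the rest), and choosing $\Phi'$ large then forces $U_{11}(x_0) \leq C$.

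The main obstacle I expect is step (5), the absorption of the third-order terms — specifically controlling $\sum_i F^{ii} u_{1i1}^2 / U_{11}$ and the mixed terms $F^{ij,rs}u_{ij1}u_{rs1}$ that are \emph{not} killed by the quotient-operator concavity inequality \eqref{eq07111} (which only gives back $(1-\tfrac{1}{k-l})(\widetilde F^{ij}u_{ij1})^2/(\sigma_k/\sigma_l)$, a single-term bound rather than full concavity). One must exploit that at the maximum $u_{11i} = -(\Phi' \cdot 2u_k u_{ki} + \Psi' u_i)U_{11}$, so these third derivatives in the direction of the large eigenvalue are themselves $O(U_{11}\cdot|D^2u|)$, and then use the Lin–Trudinger / Chou–Wang type estimate together with the uniform ellipticity from $\gamma \geq 1$ (Remark after Lemma \ref{le3}) to trade them against the positive $\Phi' F^{ii}u_{ii}^2$ term. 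The degeneracy $f \geq 0$ enters only through the right-hand side and is neutralized by the $C^{1,1}$ bound on $f^{1/(k-l)}$; the genuine analytic work is the third-order absorption, which is where the constant $C$ in \eqref{eq2002} and the restriction $k < n$ (so that $\sum F^{ii}$ stays bounded below and the cone $\widetilde\Gamma_k$ behaves well) come from.
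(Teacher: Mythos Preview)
Your plan would work, but it is considerably more elaborate than what the paper does, and your ``main obstacle'' in step (5) is self-inflicted. The paper uses the much simpler test function
\[
  V(x,\xi) = u_{\xi\xi} + \tfrac{1}{2}u_\xi^2
\]
with no logarithm and no auxiliary $\Phi,\Psi$. At an interior maximum (coordinates chosen so $D^2u$ is diagonal and $\xi_0=e_1$), the term $\tfrac12 u_1^2$ contributes $F^{11}u_{11}^2$ directly, and by the \emph{uniform} ellipticity of Lemma~\ref{le3}(3) one has $F^{11}\geq c\sum_i F^{ii}$; this yields the good term $u_{11}^2\sum_i F^{ii}$ without ever producing the third-order quotient $\sum_i F^{ii}(U_{11})_i^2/U_{11}^2$ that your logarithm creates. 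So the entire Chou--Wang/Lin--Trudinger absorption machinery you anticipate in step (5) is unnecessary here.

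The other difference is how the degeneracy is handled. You propose working with $\widetilde F$ and invoking its full concavity $\widetilde F^{ij,rs}\leq 0$. The paper instead works with $F$ and exploits an exact cancellation: inequality \eqref{eq07111} gives $-F^{ij,rs}u_{ij1}u_{rs1}\geq -(1-\tfrac{1}{k-l})(\partial_1 f)^2/f$, while the chain rule gives $\partial_{11}f = (k-l)f^{1-\frac{1}{k-l}}\partial_{11}\widetilde f + (1-\tfrac{1}{k-l})(\partial_1 f)^2/f$. These two $(\partial_1 f)^2/f$ terms cancel identically, leaving $(k-l)f^{1-\frac{1}{k-l}}\partial_{11}\widetilde f$, which is bounded by $C(1+u_{11})\sum_i F^{ii}$ via Lemma~\ref{le3}(4) and the $C^{1,1}$ bound on $\widetilde f$. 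This is the same mechanism you describe in step (3), just organized so that no lower bound on $f$ is ever needed. (Incidentally, your remark that \eqref{eq07111} ``only gives a single-term bound rather than full concavity'' reflects a mix-up: \eqref{eq07111} is exactly equivalent to full concavity of $\widetilde F$, so if you work with $\widetilde F$ the concavity term is $\leq 0$ outright.)

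In short: both routes arrive at $0\geq \bigl(u_{11}^2 - Cu_{11} - C\bigr)\sum_i F^{ii}$, but the paper's route is a few lines because uniform ellipticity makes the log-free test function sufficient.
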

\begin{proof}
Consider an auxiliary function for a vector $\xi\in \mathbb{R}^{n}$
\begin{equation*}
  V(x,\xi)=u_{\xi\xi}+\frac{1}{2}u_{\xi}^{2}.
\end{equation*}
Assume that $V$ takes its maximum at an interior point $x_0\in\Omega$ and a unit vector $\xi_0$. Without loss of generality, we can choose the coordinate system $e_{1},e_{2},\cdots,e_{n}$ at $x_0$ such that $e_1(x_0)=\xi_0$ and $\{u_{ij}(x_{0})\}$ is diagonal with $\max u_{ii}(x_0)=u_{11}(x_{0})$. Set
\begin{equation}\label{eq4004}
  \widetilde{V}=\;u_{11}+\frac{1}{2}u_{1}^{2},
\end{equation}
at $x_{0}$
\begin{eqnarray}
&0&=\widetilde{V}_{i}=u_{11i}+u_{1}u_{1i},\\ \label{eq4005}
&0&\geq\widetilde{V}_{ij}=u_{11ij}+u_{1i}u_{1j}+u_{1}u_{1ij}.\label{eq4006}
\end{eqnarray}
Recall the definition $\widetilde{f}=f^{\frac{1}{k-l}}$ and use \eqref{eq091902}, Lemma \ref{le3} (4)
\begin{align}\label{eq0904401}
\partial_{11}f=&\,(k-l)f^{1-\frac{1}{k-l}}\partial_{11}\widetilde{f}++\left(1-\frac{1}{k-l}\right)\frac{(\partial_{1}f)^{2}}{f}\\
\nonumber\geq&\,-Cf^{1-\frac{1}{k-l}}-Cu_{11}f^{1-\frac{1}{k-l}}+\left(1-\frac{1}{k-l}\right)\frac{(\partial_{1}f)^{2}}{f}\\
\nonumber\geq&\,-C\sum_{i=1}^{n}F^{ii}-Cu_{11}\sum_{i=1}^{n}F^{ii}+\left(1-\frac{1}{k-l}\right)\frac{(\partial_{1}f)^{2}}{f}.
\end{align}
By \eqref{eq0904204}, \eqref{eq07111}, \eqref{eq4006}, \eqref{eq0904401} and the condition $f^{\frac{1}{k-l}}\in C^{1,1}$
\begin{equation}\label{eq4007}
\begin{split}
  0\geq F^{ij}\widetilde{V}_{ij}%
  =&\;F^{ij}u_{11ij}+F^{ij}u_{1i}u_{1j}+F^{ij}u_{1}u_{1ij}\\
  \geq&\,\partial_{11}f-F^{ij,rs}u_{ij1}u_{rs1}+Cu_{11}^2\sum_{i=1}^{n}F^{ii}+u_{1}\partial_1 f\\
  \geq&\;-C\sum_{i=1}^{n}F^{ii}-Cu_{11}\sum_{i=1}^{n}F^{ii}+u_{11}^{2}\sum_{i=1}^{n}F^{ii},
\end{split}
\end{equation}
which means that
\begin{equation}\label{eq4010}
  \sup_{\Omega}|D^{2}u|\leq\sup_{\partial\Omega}|D^{2}u|+C.
\end{equation}
\end{proof}
\subsection{Boundary second order estimates}
In this subsection, we establish boundary second order estimates, and divide the proof into three cases: double tangential derivatives, mixed tangential-oblique derivatives and double oblique derivatives.
We assume that the function $G(x,u,Du)$ and the unit inner normal vector field $\nu$ have been extended to $\overline{\Omega}$, to be constant along normals to $\partial\Omega$ in some neighbourhood $\Omega_{0}$ of $\partial\Omega$.
\begin{lemma}\label{lemqx}
Let $u\in C^{2}(\overline{\Omega})$ be an admissible solution of problem \eqref{eq07081} in a bounded $C^{2}$ domain $\Omega\subset\mathbb{R}^{n}$, the oblique boundary condition $G\in C^{2}(\partial\Omega\times\mathbb{R}\times\mathbb{R}^{n})$. Then for any tangential vector field $\tau$
\begin{equation}\label{lm401}
  \sup_{\partial\Omega}|u_{\tau\beta}|\leq C,
\end{equation}
where $\beta=G_{p}(x,u,Du)$ and the positive constant $C$ depends on $\overline{\Omega},|G|_{C^{1}},|u|_{C^{1}}$.
\end{lemma}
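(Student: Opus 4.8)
The plan is to differentiate the oblique boundary condition $G(x,u,Du)=0$ tangentially along $\partial\Omega$ and read off the mixed derivative $u_{\tau\beta}$ from the resulting identity. Fix $x_0\in\partial\Omega$ and a tangential vector field $\tau$ defined near $x_0$ in $\partial\Omega$; I would extend $\tau$ into $\Omega_0$ so that it stays tangential on $\partial\Omega$, and use the convention (already in force in the excerpt) that $\nu$ and $G$ are extended to be constant along normals. Since $G$ vanishes identically on $\partial\Omega$, applying the tangential derivative $D_\tau$ to $G(x,u,Du)\equiv 0$ gives, at $x_0$,
\begin{equation}\label{eq:Gtau}
G_{x_k}\tau^k + G_z\,u_k\tau^k + G_{p_k}\,u_{k i}\tau^i = 0 .
\end{equation}
The last term is $\sum_k G_{p_k}(D_\tau Du)_k$; writing $\beta = G_p$ and recognizing $\sum_k \beta_k u_{ki}\tau^i = u_{\tau\beta}$ up to first-order terms coming from differentiating $\tau$ itself, \eqref{eq:Gtau} becomes $u_{\tau\beta} = -(G_{x_k}\tau^k + G_z u_k\tau^k) + (\text{terms involving } Du \text{ and } D\tau)$.

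The key point is that \emph{every} term on the right-hand side of the rearranged identity involves only $x$, $u$, $Du$ and the fixed smooth data $G,\tau,\nu$ — no second derivatives of $u$ survive except the one combination $u_{\tau\beta}$ we are solving for. Concretely, $G_{x_k}$, $G_z$, $G_{p_k}$ are evaluated at $(x,u,Du)$ and are bounded by $|G|_{C^1}$ and $|u|_{C^1}$; the quantities $u_k\tau^k$ are bounded by $|u|_{C^1}$; and the contribution from $D_\tau$ hitting the coefficients of $\tau$ (since $\tau$ is a fixed smooth vector field depending only on $\partial\Omega$) is controlled by $|u|_{C^1}$ and the geometry of $\partial\Omega$, hence absorbed into the constant $C$ depending on $\overline{\Omega}$. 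Collecting these bounds yields $|u_{\tau\beta}(x_0)|\le C(\overline{\Omega}, |G|_{C^1}, |u|_{C^1})$, and since $x_0$ and $\tau$ were arbitrary this is \eqref{lm401}.

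There is essentially no hard analytic obstacle here: the estimate is purely algebraic once one differentiates the boundary relation, and the only care needed is bookkeeping — making sure that when $D_\tau$ falls on the $Du$-slot of $G$ it produces exactly $u_{\tau\beta}$ plus lower-order pieces, and that no stray second derivative of $u$ (e.g. from commuting $D_\tau$ past $D$) appears. The mild subtlety is the choice of extension of $\tau$ off the boundary so that the computation is clean; taking $\tau$ independent of the normal direction near $\partial\Omega$ (consistent with the extension conventions already assumed for $\nu$ and $G$) makes the error terms manifestly first order in $u$. This is the standard first step in oblique-derivative boundary estimates, and it is the easy one; the genuine difficulties will be in the later cases — double tangential and double oblique second derivatives — not in this lemma.
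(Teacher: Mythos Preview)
Your proposal is correct and follows exactly the paper's approach: differentiate the boundary relation $G(x,u,Du)=0$ in the tangential direction $\tau$ to obtain $G_{x_\tau}+G_z u_\tau+G_{p_k}u_{k\tau}=0$, then identify $G_{p_k}u_{k\tau}=u_{\tau\beta}$ and bound the remaining first-order terms. The paper's proof is precisely this one-line computation, so there is nothing to add.
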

\begin{proof}
Differentiating $G(x,u,Du)=0$ in problem \eqref{eq07081} with respect to $\tau$, we have
\begin{equation}\label{eq42011}
  D_{\tau}G=\left\langle DG,\tau\right\rangle=G_{x_{\tau}}+G_{z}u_{\tau}+G_{p_{k}}u_{k\tau}=0.
\end{equation}
Hence, we get $|u_{\tau\beta}|\leq C$ on $\partial\Omega$.
\end{proof}
For convenience, denote
\begin{equation*}
  M_{2}=\sup_{\Omega}\left|D^{2}u\right|,\quad M_{2}'=\sup_{\partial\Omega}\sup_{|\tau|=1,\left\langle\tau,\nu\right\rangle=0}\left|u_{\tau\tau}\right|.
\end{equation*}
Next, we estimate double tangential derivatives on the boundary.
\begin{lemma}\label{lemqq}
Let $u\in C^{4}(\overline{\Omega})$ be an admissible solution of problem \eqref{eq07081} in a bounded $C^{4}$ domain $\Omega\subset\mathbb{R}^{n}$. Suppose that the oblique boundary condition $G\in C^{2}(\partial\Omega\times\mathbb{R}\times\mathbb{R}^{n})$ satisfies \eqref{eq1002} with $G_{pp}\leq 0$ and $0\leq f^{\frac{1}{k-l}}\in C^{1,1}(\overline{\Omega}\times\mathbb{R})$. Then for any tangential vector field $\tau$ with $|\tau|\leq1$ and constant $\varepsilon>0$, we have
\begin{equation}\label{eq09054301}
  \sup_{\partial\Omega}u_{\tau\tau}\leq\varepsilon M_{2}+C_{\varepsilon},
\end{equation}
where $C_{\varepsilon}$ is a positive constant depending on $\varepsilon$, $\overline{\Omega}$, $n$, $k$, $l$, $\gamma$, $\beta_{0}$, $|u|_{C^{1}}$, $|f^{\frac{1}{k-l}}|_{C^{1,1}}$ and $|G|_{C^{2}}$.
\end{lemma}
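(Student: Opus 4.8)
The plan is to adapt the classical barrier argument of Lieberman-Trudinger and Jiang-Trudinger-Xiang to the degenerate quotient setting, exploiting the uniform ellipticity of $F$ coming from the matrix $\gamma\triangle u I - D^2u$ (cf. Lemma \ref{le3}) and, crucially, using the regularity hypothesis $f^{1/(k-l)}\in C^{1,1}$ to replace any appeal to a positive lower bound on $f$. Fix $x_0\in\partial\Omega$ at which $u_{\tau\tau}$ is largest over all boundary points and all unit tangential $\tau$; after rotating we may take $\nu=e_n$ and $\tau=e_1$ at $x_0$, and work with a boundary frame $\{e_1,\dots,e_{n-1},\nu\}$ extended constantly along normals into a neighbourhood $\Omega_0$. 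The starting point is the standard identity obtained by differentiating the boundary condition $G(x,u,Du)=0$ twice tangentially: on $\partial\Omega$ near $x_0$ one gets
\begin{equation*}
  u_{\tau\tau\beta} = -\beta^k u_{k\tau\tau}\big|_{\text{lower order}} = (\text{terms involving } u_{\tau\tau},\, Du,\, D^2_{\partial\Omega}u,\, G\text{-derivatives}),
\end{equation*}
so that $u_{\tau\tau}$ restricted to $\partial\Omega$ has a computable oblique derivative bounded by $C(1+M_2'+M_2^{1/2}\cdot?)$; the role of the $\varepsilon M_2$ slack on the right of \eqref{eq09054301} is precisely to absorb the terms where a normal second derivative sneaks in through the Weingarten map and the concavity term $G_{pp}\le 0$ (which is used with the correct sign to discard an unfavourable quadratic in $Du$).

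The core of the argument is a barrier construction on $\Omega_0$. I would set, after subtracting off the tangential-derivative boundary data, an auxiliary function of the shape
\begin{equation*}
  \Phi = A\, d(x) + B\, d(x)^2 + \sum_{s<n}\big(u_{\tau\tau} + \text{correction}_s\big) - \tfrac{1}{2}\varepsilon\,|Du - Du(x_0)|^2 - N|x-x_0|^2 ,
\end{equation*}
or more honestly a linearized-operator barrier of the familiar form $w = (\underline u - u) + t\,d - \tfrac{N}{2}d^2$ combined with the tangential second-difference quotient, and test it with the linearized operator $L = F^{ij}\partial_{ij}$. Feeding in equation \eqref{eq0904201} differentiated twice (so that $F^{ij}u_{\tau\tau ij}$ produces $\partial_{\tau\tau}f$, which by $f^{1/(k-l)}\in C^{1,1}$ and $f_u\ge 0$ is bounded below by $-C\sum F^{ii}$ up to the concavity term $F^{ij,rs}u_{ij\tau}u_{rs\tau}\le \big(1-\tfrac{1}{k-l}\big)\frac{(F^{ij}u_{ij\tau})^2}{F}$ from Lemma \ref{lem0913201}), and using $L d \ge -C\sum F^{ii}$ together with $\sum F^{ii}\ge C(n,k,l) f^{1-1/(k-l)}$ and $F^{ij}d_id_j\ge \text{const}\cdot\sum F^{ii}$ from the uniform ellipticity in Lemma \ref{le3}(3)--(4), one arranges $L\Phi \le 0$ in $\Omega_0\cap\{d<\delta\}$ for $A$ large and $\delta$ small. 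On the boundary pieces $d=0$ and $d=\delta$ one checks $\Phi \le 0$ using the twice-differentiated boundary identity above (on $\partial\Omega$) and the interior bound $\sup_\Omega|D^2u|\le \sup_{\partial\Omega}|D^2u|+C$ from Lemma \ref{lm0904401} together with the already-established $C^1$ bound (on $d=\delta$, where we pay the $\varepsilon M_2$ term). The maximum principle then gives $\Phi\le 0$ throughout, and evaluating $\partial_\nu\Phi\ge 0$ at $x_0$ (which holds since $\Phi(x_0)=0$ is the max) converts into the desired inequality $u_{\tau\tau}(x_0)\le \varepsilon M_2 + C_\varepsilon$.

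The main obstacle, and where the degeneracy bites, is controlling the third-order terms $F^{ij}u_{ij\tau}$ and the quotient second-variation term without any lower bound on $f$: the estimate must be structured so that every third-order quantity appears either inside the good concavity term of Lemma \ref{lem0913201} or multiplied by a factor that is itself $O(f^{1-1/(k-l)})=O(\sum F^{ii})$, which is exactly what the hypothesis $f^{1/(k-l)}\in C^{1,1}$ buys us via $\partial_\tau f = (k-l)f^{1-1/(k-l)}\partial_\tau\widetilde f$ and $\partial_{\tau\tau}f = (k-l)f^{1-1/(k-l)}\partial_{\tau\tau}\widetilde f + (1-\tfrac{1}{k-l})\frac{(\partial_\tau f)^2}{f}$ with the last term having the favourable sign after combining with the concavity inequality. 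A secondary delicate point is the correct bookkeeping of the Weingarten/frame-torsion terms produced when commuting tangential derivatives with $\nu$ near $x_0$, which contribute the $M_2'$ and hence motivate iterating \eqref{eq09054301} afterwards to close the estimate on $M_2'$ itself; but that iteration is deferred and not part of this lemma.
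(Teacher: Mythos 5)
Your plan correctly identifies the key technical ingredients that make the degenerate case work: the concavity of $\widetilde{F}=F^{1/(k-l)}$ from Lemma~\ref{lem0913201}, the use of $f^{1/(k-l)}\in C^{1,1}$ to make $\partial_{\tau\tau}f$ and $\partial_\tau f$ absorbable into $\sum F^{ii}$ via $f^{1-1/(k-l)}\lesssim\sum F^{ii}$ (Lemma~\ref{le3}(4)), the uniform ellipticity from Lemma~\ref{le3}(3), and twice-differentiating $G$ with $G_{pp}\le 0$ to get the favourable sign on the quadratic term. These are exactly the tools the paper uses, so the high-level strategy is sound.

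However, there are two concrete problems. First, the barrier $\Phi$ is never actually pinned down (you leave a ``?'' in one candidate and hedge with an ``or more honestly'' alternative involving the subsolution $\underline u$, which this lemma does not use). The paper's argument depends on a quite specific construction: after placing the boundary max of $v_\tau=u_{\tau\tau}+\tfrac12|u_\tau|^2$ at $0$ with $\tau_0=e_1$, one writes $e_1=\tau+b\beta$ with $b=\nu_1/\langle\beta,\nu\rangle$, sets $h=\tfrac{1}{\langle\beta,\nu\rangle}(2u_{\tau\beta}+u_\beta u_\tau)$, and forms $\widetilde v_1=v_1-h(0)\nu_1-C_1(1+M_2)|x|^2$, which satisfies $\widetilde v_1\le Q\,v_1(0)$ on $\partial\Omega$ for a suitable $Q\in C^2$ with $Q(0)=1$, $Q\ge|\tau|^2$. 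The auxiliary function is then $\overline v=\widetilde v_1-v_1(0)Q-K(1+M_2)\phi$. None of this tangential-oblique decomposition, nor the role of $Q$, appears in your plan, and without it the passage from a bound at an interior point back to a bound on $v_1(0)$ is not justified.

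Second, your concluding step is structurally off and has a sign error. You propose to conclude by ``evaluating $\partial_\nu\Phi\ge 0$ at $x_0$ (which holds since $\Phi(x_0)=0$ is the max)''; but if $\Phi\le 0$ everywhere, $\Phi(x_0)=0$, and $\nu$ is the \emph{inner} normal (as in this paper), then $\partial_\nu\Phi(x_0)\le 0$, not $\ge 0$. More to the point, the paper does not conclude via a normal derivative at $x_0$ at all: it shows $D_\beta\overline v(0)>0$ (using the twice-differentiated boundary condition with $G_{pp}\le 0$), which forces the maximum of $\overline v$ into the interior at some $\overline y_0\in\Omega$, then applies $F^{ij}\overline v_{ij}\le 0$ at $\overline y_0$ together with the concavity/$C^{1,1}$ machinery to get $u_{11}(\overline y_0)\le\sqrt{C(1+M_2)}$, and finally uses $\overline v(\overline y_0)\ge\overline v(0)=0$ plus the choices $\phi\ge-\varepsilon/(4K)$ and $Q>1/2$ to extract $v_1(0)\le\varepsilon M_2+C_\varepsilon$. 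You should adopt this interior-max mechanism (or carefully fix the sign and fully specify $\Phi$) to make the argument close.
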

\begin{proof}
Suppose a function
\begin{equation*}\label{eq402}
  v_{\tau}=u_{\tau\tau}+\frac{1}{2}|u_{\tau}|^{2}
\end{equation*}
attains a maximum at $y_{0}\in\partial\Omega, \tau=\tau_{0}$.  Without loss of generality, we assume $y_{0}=0$ and $\tau_{0}=e_{1}=(1,0,\cdots,0)$.
Set $b=\frac{\nu_{1}}{\left\langle\beta,\nu\right\rangle}$ and $\tau=e_{1}-b\beta$, 
thus at any point on $\partial\Omega$
\begin{equation*}\label{eq403}
v_{1}
=v_{\tau}+b\left(2u_{\tau\beta}+u_{\beta}u_{\tau}\right)+b^{2}\left(u_{\beta\beta}+\frac{1}{2}u_{\beta}^{2}\right),
\end{equation*}
and $v_{1}(0)=v_{\tau}(0)$, $b(0)=0$, $\tau(0)=e_{1}$.
Let $h=\frac{1}{\left\langle\beta,\nu\right\rangle}(2u_{\tau\beta}+u_{\beta}u_{\tau})$, then on $\partial\Omega$
\begin{equation*}\label{eq404}
  \left|h(x)-h(0)\right|
  \leq C(1+M_{2})|x|,
\end{equation*}
and
\begin{equation}\label{eq405}
v_{1}-h(0)\nu_{1}
              \leq v_{\tau}+C_{1}(1+M_{2})|x|^{2},
\end{equation}
where $C_{1}$ is a positive constant depending on $\beta_{0}$, $\overline{\Omega}$, $|u|_{C^1}$, $|G|_{C^{2}}$.

Next, we consider
\begin{equation*}\label{eq406}
  \widetilde{v}_{1}=v_{1}-h(0)\nu_{1}-C_{1}(1+M_{2})|x|^{2},
\end{equation*}
where the constant $C_1$ is the same quantities in \eqref{eq405} satisfying
\begin{equation}\label{eq407}
\widetilde{v}_{1}\leq\; v_{\tau}=u_{\tau\tau}+\frac{1}{2}|u_{\tau}|^{2}
\leq|\tau|^{2}v_{1}(0)
\leq Qv_{1}(0),
\end{equation}
$Q$ is a non-negative function in $C^{2}(\overline{\Omega})$ satisfying $Q\geq|\tau|^{2}$ on $\partial\Omega$ and $Q(0)=1$.
Remark that
\begin{equation*}\label{eq408}
|\tau|^{2}=|e_{1}-b\beta|^2 
\leq\;1-2\frac{\beta_1}{\cos\theta}(0)\nu_1 +C_{1}|x|^2=Q.
\end{equation*}

Finally, we choose a new auxiliary function
\begin{equation*}\label{eq412}
  \overline{v}=\widetilde{v}_{1}-v_{1}(0)Q-K(1+M_{2})\phi,
\end{equation*}
where $\phi\in C^{2}(\overline{\Omega})$ is a function in $\Omega$ satisfying $\phi=0$, $\phi_{\nu}=-1$ and $\phi_{\tau}=0$ on $\partial\Omega$, $K$ is a sufficiently large constant.
Now differentiating $G(x,u,Du)=0$ twice in direction $\tau$ with $\tau(0)=e_1$ and using $G_{pp}\leq0$, we obtain
\begin{equation*}\label{eq410}
D_{\beta}u_{11}(0)\geq\;-C\left(1+M_{2}\right).
\end{equation*}
At $y_0=0$
\begin{equation*}\label{eq413}
  D_{\beta}\overline{v}=D_{\beta}\left[\widetilde{v}_{1}-v_{1}(0)Q\right]-K(1+M_{2})\phi_{\beta}>0
\end{equation*}
with $K>\frac{C_{1}}{\beta_{0}}$. The specific details can be referred to Jiang-Trudinger \cite{JT1,JT3}. On $\partial\Omega$
$\phi=0$, from \eqref{eq407}
$$\overline{v}(0)=0, \quad \overline{v}\leq0,\, \text{on}\, \partial\Omega,$$
hence $\overline{v}$ takes its maximum at $\overline{y}_{0}\in\Omega$. Suppose that the matrix $\{u_{ij}(\overline{y}_{0})\}$ is diagonal, at $\overline{y}_{0}$
\begin{align}\label{eq414}
 \nonumber0\geq L\overline{v}
 =&\;F^{ij}u_{11ij}+F^{ij}u_{1i}u_{1j}+u_1F^{ij}u_{1ij}-h(0)F^{ij}\nu_{1ij}-2C_{1}(1+M_{2})\sum_{i=1}^{n}F^{ii}\\
 \nonumber &\; +2\frac{\beta_1}{\left\langle\beta,\nu\right\rangle}(0)v_1(0)F^{ij}\nu_{1ij}-2C_1v_1(0)\sum_{i=1}^{n}F^{ii}-N(1+M_2)F^{ij}\phi_{ij}\\
   \geq&\;-C(1+M_{2})\sum_{i=1}^{n}F^{ii}+Cu_{11}^{2}\sum_{i=1}^{n}F^{ii},
\end{align}
this implies $ u_{11}(\overline{y}_{0})\leq\sqrt{C(1+M_{2})}\leq\varepsilon M_{2}+C_{\varepsilon}$,
where $\varepsilon$ is a positive constant and $C_{\varepsilon}$ depends on $\varepsilon$, $n$, $k$, $l$, $\gamma$, $\beta_{0}$, $\overline{\Omega}$, $|u|_{C^{1}}$, $|f^{\frac{1}{k-l}}|_{C^{1,1}}$, $|G|_{C^2}$.
Choosing $\phi\geq-\frac{\varepsilon}{4K}$ and $Q>\frac{1}{2}$, we get
\begin{equation}\label{eq416}
  v_{1}(0)\leq\varepsilon M_{2}+C_{\varepsilon}.
\end{equation}
Since $v_{\tau}$ attains the maximum at $0$ and $\tau=e_{1}$ on $\partial\Omega$, then $v_{\tau}\leq v_{1}(0)\leq\varepsilon M_{2}+C_{\varepsilon}$,
which yields that
\begin{equation}\label{eq418}
  \sup_{\partial\Omega}u_{\tau\tau}\leq\varepsilon M_{2}+C_{\varepsilon}.
\end{equation}
\end{proof}

Now it remains to establish the double oblique derivatives on the boundary.
\begin{lemma}\label{lemxx}
Let $u\in C^{3}(\overline{\Omega})$ be an admissible solution of problem \eqref{eq07081} in a bounded $C^{3}$ domain $\Omega\subset\mathbb{R}^{n}$. Suppose that the oblique boundary condition $G\in C^{2}(\partial\Omega\times\mathbb{R}\times\mathbb{R}^{n})$ satisfies \eqref{eq1002} with $G_{pp}=0$ and $0\leq f^{\frac{1}{k-l}}\in C^{1,1}(\overline{\Omega}\times\mathbb{R})$. Then for a small constant $\varepsilon>0$
\begin{equation}\label{lm402}
  \sup_{\partial\Omega}u_{\beta\beta}\leq\varepsilon M_{2}+C_{\varepsilon}\left(1+M_{2}'\right),
\end{equation}
where $C_{\varepsilon}$ is a positive constant depending on $\varepsilon$, $n$, $k$, $l$, $\gamma$, $\beta_{0}$, $\overline{\Omega}$, $|u|_{C^{1}}$, $|f^{\frac{1}{k-l}}|_{C^{1,1}}$ and $|G|_{C^{2}}$.
\end{lemma}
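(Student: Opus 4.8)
The plan is to reduce the estimate, at the boundary point where $u_{\beta\beta}$ is maximal, to a bound on a single entry of the Hessian, and then to read that bound off the equation itself. Let $x_{0}\in\partial\Omega$ be a point at which $\sup_{\partial\Omega}u_{\beta\beta}$ is attained, and choose coordinates at $x_{0}$ with $\nu=e_{n}$ and $e_{1},\dots,e_{n-1}$ tangent to $\partial\Omega$. Since $\beta$ is strictly oblique, $\beta_{\nu}=\langle\beta,\nu\rangle\geq\beta_{0}>0$; decomposing $\beta=\beta_{\nu}\nu+\sum_{s<n}\beta_{s}e_{s}$ and invoking Lemma \ref{lemqx} gives $|u_{sn}(x_{0})|\leq\beta_{0}^{-1}\bigl(|u_{\tau\beta}(x_{0})|+CM_{2}'\bigr)\leq C(1+M_{2}')$ for $s<n$, while $|u_{st}(x_{0})|\leq CM_{2}'$ for $s,t<n$ by the very definition of $M_{2}'$. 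Thus every entry of $D^{2}u(x_{0})$ other than $t:=u_{nn}(x_{0})$ is bounded by $C(1+M_{2}')$, and $u_{\beta\beta}(x_{0})=\beta_{\nu}^{2}t+O(1+M_{2}')$ with $\beta_{\nu}^{2}\leq1$. Since there is nothing to prove when $t\leq0$, it remains to bound $t>0$ from above by $\varepsilon M_{2}+C_{\varepsilon}(1+M_{2}')$.

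Next I would exploit the equation at $x_{0}$. Write $U=\gamma\triangle u\,I-D^{2}u$ at $x_{0}$ in block form with respect to the splitting $\mathbb{R}^{n-1}\oplus\mathbb{R}e_{n}$. Setting $\rho:=1+M_{2}'$, the $(n-1)\times(n-1)$ block equals $\gamma t\,I_{n-1}+E$ with $\|E\|\leq C\rho$, the $(n,n)$-entry equals $(\gamma-1)t+c$ with $|c|\leq C\rho$, and the off-diagonal coupling vector has length $\leq C\rho$. If $t\geq C_{0}\rho$ for a suitably large $C_{0}=C_{0}(n,k,l,\gamma)$, then by Weyl's inequality at least $n-1$ eigenvalues of $U$ lie in $[\tfrac12\gamma t,2\gamma t]$; because $k<n$, i.e.\ $k\leq n-1$, the products of $k$ of these already dominate, and using the admissibility $\lambda(U)\in\widetilde{\Gamma}_{k}$ for the positivity of the relevant $\sigma_{j}(U)$, one obtains $\sigma_{k}(U(x_{0}))\geq c_{1}t^{k}$ and $\sigma_{l}(U(x_{0}))\leq c_{2}t^{l}$ with $c_{1},c_{2}$ depending only on $n,k,l,\gamma$. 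When $\gamma=1$ the $(n,n)$-entry of $U$ stays bounded, so one eigenvalue of $U$ is only $O(\rho)$ and possibly negative; it is then exactly the hypothesis $k<n$ that keeps $k$ large eigenvalues available for $\sigma_{k}$, which is the structural reason $k=n$ must be excluded unless $\gamma>1$.

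Combining these with the equation gives
\[
\sup f\ \geq\ f(x_{0},u(x_{0}))\ =\ \frac{\sigma_{k}}{\sigma_{l}}\bigl(U(x_{0})\bigr)\ \geq\ \frac{c_{1}}{c_{2}}\,t^{\,k-l},
\]
where $\sup f<\infty$ because $f^{\frac{1}{k-l}}\in C^{0}(\overline{\Omega}\times\mathbb{R})$ and $|u|_{C^{0}}$ is already under control — note that only $\sup f$ ever enters, never $\inf f$, which is precisely what makes the argument robust in the degenerate setting. Hence $t^{\,k-l}\leq C$, so in either case $u_{nn}(x_{0})\leq C(1+M_{2}')$, and the reduction step then yields $\sup_{\partial\Omega}u_{\beta\beta}\leq C(1+M_{2}')\leq\varepsilon M_{2}+C_{\varepsilon}(1+M_{2}')$, which is in fact stronger than required.

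The step I expect to be the main obstacle is the block-matrix eigenvalue bookkeeping for $U(x_{0})$: one must verify carefully that the lower-order corrections to $\sigma_{k}(U)$ and $\sigma_{l}(U)$ coming from $E$, $c$ and the off-diagonal coupling are genuinely $O(t^{k-1}\rho)$ and $O(t^{l-1}\rho)$, so that they are absorbed once $t\gg\rho$, and that all thresholds and constants depend only on $n,k,l,\gamma$. The case $\gamma=1$, where one eigenvalue of $U$ remains bounded, needs the most attention and is exactly where $k<n$ is used essentially. An alternative would be to run an auxiliary-function/barrier argument on a boundary neighbourhood in the spirit of Lemma \ref{lemqq} and Jiang--Trudinger \cite{JT1,JT3}, which produces the stated form $\varepsilon M_{2}+C_{\varepsilon}(1+M_{2}')$ directly; the algebraic route sketched above already gives the sharper bound $C(1+M_{2}')$.
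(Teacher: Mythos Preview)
Your algebraic route is correct and genuinely different from the paper's barrier argument. The paper (following Jiang--Trudinger \cite{JT1,JT3}) introduces at each boundary point $x_0$ the auxiliary function $\overline{V}=G(x,u,Du)+\tfrac{a_1}{2}|Du-Du(x_0)|^2$ on a strip $\Omega_\mu$, computes $\widetilde{L}\,\overline{V}\ge -C(1+a_1^{-1})\sum\widetilde{F}^{ii}$ from the linearized equation, compares against a barrier $\overline{\phi}=-c(-d+K_3d^2)+\tfrac{a_2}{2}|x-x_0|^2$ via the maximum principle, and reads off $u_{\beta\beta}(x_0)$ from $D_\beta\overline{V}(x_0)\le D_\beta\overline{\phi}(x_0)$; balancing $a_1,a_2,c$ then yields the stated form $\varepsilon M_2+C_\varepsilon(1+M_2')$. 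Your approach instead exploits a feature peculiar to the matrix $U=\gamma\Delta u\,I-D^2u$: a single large normal entry $u_{nn}=t$ enters $\Delta u$ and is broadcast through the $\gamma\Delta u\,I$ term into every diagonal entry of $U$, producing $n-1$ eigenvalues of order $\gamma t$, so that the equation itself caps $t$ once the other entries are controlled by $M_2'$ and Lemma~\ref{lemqx}. This is purely pointwise --- no barrier, no maximum principle, no linearized operator --- and delivers the sharper bound $C(1+M_2')$ with no $\varepsilon M_2$ term. The price is specificity: for the classical $k$-Hessian $\sigma_k(D^2u)$ a large $u_{nn}$ creates only one large eigenvalue of $D^2u$ and your lower bound on $\sigma_k$ collapses, forcing one back to the barrier method, which does transfer to general augmented Hessians. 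Both routes ultimately rest on the same uniform-ellipticity phenomenon (your $n-1$ large eigenvalues versus the paper's $\widetilde{F}^{ij}\nu_i\nu_j\ge\delta\sum\widetilde{F}^{ii}$ needed to make the barrier work), and both require $k<n$ when $\gamma=1$, exactly as you identify.
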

\begin{proof}
For any fixed boundary point $x_{0}\in\partial\Omega$, we consider the function
\begin{equation}\label{eq4201}
  \overline{V}=G(x,u,Du)+\frac{a_{1}}{2}|Du-Du(x_{0})|^{2},
\end{equation}
here $a_{1}\leq1$ is a positive constant.
 Since $\widetilde{F}$ is homogeneous of degree $1$ and $G_{pp}=0$
\begin{equation}\label{eq4204}
\begin{aligned}
  \widetilde{L}\overline{V}
  =&\;\widetilde{F}^{ij}G_{x_{i}x_{j}}+2\widetilde{F}^{ij}G_{x_{i}z}u_{j}+2\widetilde{F}^{ij}G_{x_{i}p_{k}}u_{jk}+G_{zz}\widetilde{F}^{ij}u_{i}u_{j}\\
  &\;+2\widetilde{F}^{ij}G_{zp_{k}}u_{i}u_{jk}+G_{z}\widetilde{F}^{ij}u_{ij}+G_{p_{k}p_{l}}\widetilde{F}^{ij}u_{ik}u_{lk}+G_{p_{k}}\widetilde{F}^{ij}u_{ijk}\\
  &\;+a_{1}\widetilde{F}^{ij}u_{ik}u_{jk}+a_{1}(u_{k}-u_{k}(x_{0}))\widetilde{F}^{ij}u_{ijk}\\
 \geq&\;-C\sum_{i=1}^{n}\widetilde{F}^{ii}+a_{1}\widetilde{F}^{ij}u_{ik}u_{jk}+2\widetilde{F}^{ij}G_{x_{i}p_{k}}u_{jk}+2\widetilde{F}^{ij}G_{zp_{k}}u_{i}u_{jk}\\
 \geq&\;-C\left(1+\frac{1}{a_{1}}\right)\sum_{i=1}^{n}\widetilde{F}^{ii},  
\end{aligned}
\end{equation}
where $\widetilde{L}=\widetilde{F}^{ij}\partial_{ij}$ and we use the Cauchy-Schwarz inequality in the last inequality.

Now, a suitable upper barrier for $\overline{V}$ at the point $x_{0}\in\partial\Omega$ can be constructed as
\begin{equation*}\label{eq4205}
  \overline{\phi}=-c\phi+\frac{a_{2}}{2}|x-x_{0}|^{2},\quad\quad\text{in}\;\Omega_{\mu},
\end{equation*}
with $\phi=-d+K_{3}d^{2}$,
where $\Omega_{\mu}:=\{x\in\overline{\Omega}|\;0<d(x)\leq\mu\}$ and $K_{3}$, $c$, $a_{2}$, $\mu$ are positive constants. Choose $\mu$, $\delta>0$ small enough and $A_3$ large enough, s.t.
\begin{equation*}\label{eq4207}
            \widetilde{L}  \overline{\phi}
              = -c\widetilde{F}^{ij}\phi_{ij}+a_{2}\widetilde{F}^{ij}\delta_{ij}
              \leq(a_{2}-c\delta)\sum_{i=1}^{n}\widetilde{F}^{ii},\quad\text{in}\;\Omega_{\mu},
\end{equation*}
then we obtain $ \widetilde{ L} \left(\overline{V}-\overline{\phi}\right)\geq 0$ in $v$ by choosing $c\geq\frac{a_{2}}{\delta}+\frac{C}{\delta}\left(1+\frac{1}{a_{1}}\right)$.

Next, we deal with $\overline{V}$ and $\overline{\phi}$ on the boundary of $\Omega_{\mu}$. For any $x\in\partial\Omega$
$$\overline{V}=\frac{a_{1}}{2}|Du(x)-Du(x_{0})|^{2}, \quad \overline{\phi}=\frac{a_{2}}{2}|x-x_{0}|^{2},$$
thus
\begin{equation}\label{eq4209}
\begin{split}
  |Du(x)-Du(x_{0})|\leq C(1+M_{2}')|x-x_{0}|,
\end{split}
\end{equation}
consequently
\begin{equation}\label{eq4210}
\begin{split}
  |Du(y)-Du(x_{0})|^{2}
  \leq C(1+(M_{2}')^{2}+M_{2})\left(|y'-x_{0}|^{2}+d\right),
\end{split}
\end{equation}
where $y\in\Omega_{\mu}$ and $y'$ is the closest point on $\partial\Omega$.
Choose $a_{2}=a_{1}(1+(M_{2}')^{2})+M_{2})$ and $c\geq a_{2}$ s.t.
\begin{equation}\label{eq09064301}
  \overline{V}-\overline{\phi}\leq0,\quad\text{on}\;\overline{\Omega}_{\mu}\cap\left\{G(x,z,p)=0\right\}\supset\partial\Omega.
\end{equation}
On $\partial\Omega_{\mu}\cap\Omega$, we have $\overline{V}-\overline{\phi}\leq0$ if $c\geq\frac{C}{\mu}$.
Therefore, $\overline{V}-\overline{\phi}\leq0$ in $\Omega_{\mu}$.
Since $\overline{V}(x_{0})=\overline{\phi}(x_{0})=0$
$$D_{\beta}\overline{V}(x_{0})\leq D_{\beta}\overline{\phi}(x_{0}),$$
which implies
\begin{equation*}
  u_{\beta\beta}(x_{0})\leq c\sup_{\partial\Omega}\left\langle\nu,\beta\right\rangle+C.
\end{equation*}
Finally, we fix the constant $c$
\begin{equation}\label{eq4215}
\begin{aligned}
  c\leq\frac{a_{2}+C(1+\frac{1}{a_{1}})}{\delta}+a_{2}+\frac{C}{\mu}
    \leq\frac{C}{\delta}\varepsilon_{1}M_{2}+C_{\varepsilon_{1}}(1+M_{2}'),
\end{aligned}
\end{equation}
where $C$ depends on $n$, $k$, $l$, $\gamma$, $\beta_{0}$, $\overline{\Omega}$, $|u|_{C^{1}}$, $|f^{\frac{1}{k-l}}|_{C^{1,1}}$ and $|G|_{C^{2}}$. For any $\varepsilon>0$, taking $a_{1}=\frac{1}{1+\varepsilon_{1}M_{2}}$ and $\varepsilon_{1}\leq\frac{\varepsilon\delta}{C\sup_{\partial\Omega}\left\langle\nu,\beta\right\rangle}$, we get
\begin{equation}\label{eq4216}
  u_{\beta\beta}(x_{0})\leq\varepsilon M_{2}+C_{\varepsilon}(1+M_{2}').
\end{equation}
\end{proof}

Based on the preceding lemmas, we directly arrive at the following global $C^{2}$ estimates.
\begin{lemma}\label{C2}
Let $u\in C^{4}(\overline{\Omega})$ be an admissible solution of the boundary value problem \eqref{eq07081} in a bounded $C^{4}$ domain $\Omega\subset\mathbb{R}^{n}$. Suppose that the oblique boundary condition $G\in C^{2}(\partial\Omega\times\mathbb{R}\times\mathbb{R}^{n})$ satisfies \eqref{eq1002} with $G_{pp}=0$ and $0\leq f^{\frac{1}{k-l}}\in C^{1,1}(\overline{\Omega}\times\mathbb{R})$. Then
\begin{equation}\label{eq4301}
  \sup_{\overline{\Omega}}|D^{2}u|\leq C,
\end{equation}
where $C$ is a positive constant depending on  $\overline{\Omega}$, $n$, $k$, $l$, $\gamma$, $\beta_{0}$, $|u|_{C^{1}}$, $|f^{\frac{1}{k-l}}|_{C^{1,1}}$, $|G|_{C^{2}}$.
\end{lemma}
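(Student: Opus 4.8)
The plan is to assemble the four preceding lemmas, using the admissibility structure to turn one-sided (largest-eigenvalue) control into a genuine bound for $|D^2u|$, and then to absorb the resulting $\varepsilon M_2$-terms. First I would record that for an admissible $u$ the negative part of $D^2u$ is controlled by its positive part: since $\lambda(\gamma\triangle u I-D^2u)\in\Gamma_k\subset\Gamma_1$, we have $\sigma_1(\gamma\triangle u I-D^2u)=(n\gamma-1)\triangle u>0$, hence $\triangle u>0$ on $\overline{\Omega}$; ordering the eigenvalues of $D^2u$ as $\lambda_1\ge\cdots\ge\lambda_n$ this forces $\lambda_1>0$ and $\lambda_n=\triangle u-\sum_{i<n}\lambda_i>-(n-1)\lambda_1$, so that
\begin{equation*}
  |D^2u|\le (n-1)\,\lambda_{\max}(D^2u)=(n-1)\sup_{|\xi|=1}u_{\xi\xi}\qquad\text{on }\overline{\Omega}.
\end{equation*}
Combining this with the interior estimate $M_2\le\sup_{\partial\Omega}|D^2u|+C$ of Lemma \ref{lm0904401}, the problem reduces to bounding $\sup_{\partial\Omega}\lambda_{\max}(D^2u)$.

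Next I would split off the oblique direction on the boundary. Fix $x_0\in\partial\Omega$ and let $\xi_0$ be a unit eigenvector of $D^2u(x_0)$ with eigenvalue $\lambda_{\max}(D^2u)(x_0)$. Since $\langle\beta,\nu\rangle\ge\beta_0>0$ and $|\beta|$ is bounded, write $\xi_0=\tau+c\beta$ with $\tau$ tangent to $\partial\Omega$, $|c|\le\beta_0^{-1}$ and $|\tau|\le C$, so that at $x_0$ one has $\lambda_{\max}(D^2u)(x_0)=u_{\tau\tau}+2c\,u_{\tau\beta}+c^2u_{\beta\beta}$. Here Lemma \ref{lemqx} gives $|u_{\tau\beta}(x_0)|\le C$; Lemma \ref{lemqq} gives $u_{\tau\tau}(x_0)\le|\tau|^2\sup_{\partial\Omega}u_{ee}\le C(\varepsilon M_2+C_\varepsilon)$ for unit tangent $e$; and Lemma \ref{lemxx} gives $u_{\beta\beta}(x_0)\le\sup_{\partial\Omega}u_{\beta\beta}\le\varepsilon M_2+C_\varepsilon(1+M_2')$. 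Hence $\sup_{\partial\Omega}\lambda_{\max}(D^2u)\le C\varepsilon M_2+C_\varepsilon(1+M_2')$, and together with the reduction above,
\begin{equation*}
  M_2\le (n-1)\bigl[C\varepsilon M_2+C_\varepsilon(1+M_2')+C\bigr].
\end{equation*}

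It then remains to control $M_2'$ and to absorb. For a unit tangent vector $e$ at a boundary point, Lemma \ref{lemqq} gives $u_{ee}\le\varepsilon M_2+C_\varepsilon$, while the elementary bound of the first step gives $u_{ee}\ge\lambda_{\min}(D^2u)\ge-(n-1)\lambda_{\max}(D^2u)$; thus $M_2'$ too is controlled by $\sup_{\partial\Omega}\lambda_{\max}(D^2u)$, hence by the right-hand side of the previous display. Substituting this in and choosing $\varepsilon$ sufficiently small, depending only on $n$ and the structural constants, so as to absorb all $M_2$-terms into the left-hand side, one obtains $|D^2u|\le C$ on $\overline{\Omega}$ with $C$ of the asserted dependence.

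The step I expect to be the main obstacle is the absorption: one must verify that, after inserting the three boundary estimates together with the bound for $M_2'$, the coefficient multiplying $M_2$ on the right is genuinely less than $1$. This forces a definite order for fixing the small parameters — $\varepsilon$ first, depending only on $n$ and the structural constants, and only afterwards the geometry of $\partial\Omega$ and the barrier constants, which enter $C_\varepsilon$ — and it is precisely here that the hypothesis $f^{\frac1{k-l}}\in C^{1,1}(\overline{\Omega}\times\mathbb{R})$ is indispensable, since it is what keeps every $C_\varepsilon$ in Lemmas \ref{lemqq} and \ref{lemxx} (and the constant in Lemma \ref{lm0904401}) independent of $\inf_{\overline{\Omega}}f$. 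Everything else is routine bookkeeping with the eigenvalue decomposition at the boundary.
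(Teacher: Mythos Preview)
Your strategy coincides with the paper's: assemble the four boundary/interior lemmas, use the admissibility fact $\sigma_1(\gamma\triangle u\,I-D^2u)=(n\gamma-1)\triangle u>0$ to convert one–sided control into a two–sided Hessian bound, and then absorb the $\varepsilon M_2$–terms. The eigenvector decomposition $\xi_0=\tau+c\beta$ and the observation $|D^2u|\le(n-1)\lambda_{\max}(D^2u)$ are exactly the ingredients the paper uses implicitly when it writes ``$\sup_{\partial\Omega}|D^2u|\le\varepsilon M_2+C_\varepsilon$''.

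There is, however, a genuine problem in your absorption step. You feed the bound $M_2'\le(n-1)\sup_{\partial\Omega}\lambda_{\max}(D^2u)$ back into the estimate $\sup_{\partial\Omega}\lambda_{\max}(D^2u)\le C\varepsilon M_2+C_\varepsilon(1+M_2')$, which gives
\[
\bigl(1-(n-1)C_\varepsilon\bigr)\,\sup_{\partial\Omega}\lambda_{\max}(D^2u)\le C\varepsilon M_2+C_\varepsilon,
\]
and since $C_\varepsilon\to\infty$ as $\varepsilon\to0$ the left factor is negative for small $\varepsilon$; the inequality yields nothing. A single small parameter cannot close this loop, contrary to what your last paragraph suggests.

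The paper avoids this by using \emph{two independent} small parameters. It first substitutes $M_2'\le\varepsilon_1 M_2+C_{\varepsilon_1}$ (coming from Lemma~\ref{lemqq}) into Lemma~\ref{lemxx} applied with a separate parameter $\varepsilon_3$:
\[
\sup_{\partial\Omega}u_{\beta\beta}\le\varepsilon_3 M_2+C_{\varepsilon_3}\bigl(1+\varepsilon_1 M_2+C_{\varepsilon_1}\bigr).
\]
Now one fixes $\varepsilon_3$ first, records $C_{\varepsilon_3}$, and only afterwards chooses $\varepsilon_1$ so small that $C_{\varepsilon_3}\varepsilon_1$ is as small as desired; the total coefficient of $M_2$ is $\varepsilon_3+C_{\varepsilon_3}\varepsilon_1$, which can be made arbitrarily small. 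This decoupling is precisely what your single-$\varepsilon$ scheme lacks. To salvage your argument you should separate the parameters in Lemmas~\ref{lemqq} and~\ref{lemxx} and obtain a bound of the form $M_2'\le\varepsilon_1 M_2+C_{\varepsilon_1}$ \emph{before} invoking Lemma~\ref{lemxx}, rather than routing $M_2'$ through $\sup_{\partial\Omega}\lambda_{\max}(D^2u)$.
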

\begin{proof}
Combine Lemma \ref{lm0904401}-Lemma \ref{lemxx}
\begin{align*}
\sup_{\partial\Omega}u_{\beta\beta}\leq \varepsilon_3 M_2+C_{\varepsilon_3}(1+\varepsilon_1 M_2+C_{\varepsilon_1})\leq \varepsilon M_2+C_\varepsilon,
\end{align*}
and $\sup_{\partial\Omega}|D^2 u|\leq \varepsilon M_2+C_\varepsilon$. Then
\begin{align*}
M_2\leq\sup_{\partial\Omega}|D^2 u|+C\leq \varepsilon M_2+C_\varepsilon,
\end{align*}
which implies $\sup_{\overline{\Omega}}|D^{2}u|\leq C$.
\end{proof}
Finally, we complete the proof of Theorem \ref{Th1}. From the maximal principle, $C^0$ estimates can be directly obtained by the existence of the admissible subsolution and $f_u\geq 0,\ \varphi_u>0$.
Then based on the a priori estimates for non-degenerate case, $C^{2,\alpha}\;(0<\alpha<1)$ estimates can be established by Evans-Krylov theory and higher order estimates followed by Schauder theory. Then the existence result can be derived by the continuity method and the uniqueness assertion is immediate from the maximum principle, more details see \cite{GT1997}. Theorem \ref{Th1} can be proved by approximation as in \cite{JW2022}.


\end{document}